\newtheorem{thm}{Theorem}[section]
\newtheorem{cor}[thm]{Corollary}
\theoremstyle{remark}
\newtheorem{defin}[thm]{Definition}
\newtheorem{exam}[thm]{Example}
\newcommand{\ie}{{\it i.e.}} 
\newcommand{\eg}{{\it e.g.}}
\begin{document}
\title{Extremal graph realizations and \\ graph Laplacian eigenvalues}

\author{Braxton Osting}
\address{Department of Mathematics, University of Utah, Salt Lake City, UT}
\email{osting@math.utah.edu}
\thanks{Braxton Osting acknowledges partial support from NSF DMS 17-52202.}

\keywords{graph Laplacian; spectral embedding; graph realization; eigenvalue optimization}
\subjclass[2020]{
05C50, 
68R10, 
15A42. 
}

\date{\today}

\begin{abstract} 
For a regular polyhedron (or polygon) centered at the origin, the coordinates of the vertices are eigenvectors of the graph Laplacian for the skeleton of that polyhedron (or polygon) associated with the first (non-trivial) eigenvalue. In this paper, we generalize this relationship. For a given graph, we study the eigenvalue optimization problem of maximizing the first (non-trivial) eigenvalue of the graph Laplacian over non-negative edge weights. We show that the spectral realization of the graph using the eigenvectors corresponding to the solution of this problem, under certain assumptions, is a centered, unit-distance graph realization that has maximal total variance.  This result gives a new method for generating unit-distance graph realizations and is based on convex duality. A drawback of this method is that the dimension of the realization is given by the multiplicity of the extremal eigenvalue, which is typically unknown prior to solving the eigenvalue optimization problem. Our results are illustrated with a number of examples. 
\end{abstract}


\maketitle

\section{Introduction} \label{s:Intro}
There is a beautiful observation attributed to C.~D. Godsil that the coordinates of the vertices of some polytopes are the eigenvectors of the adjacency matrix for the skeleton of that polytope \cite{Godsil_1978}. For regular polygons and polyhedra that are centered at the origin, this connection is not difficult to see geometrically as, by symmetry,  averaging the neighboring vertices  $N(i) \subset V$ of a given vertex $i \in V$ gives a vector that is collinear with the vertex, \ie, there exists $\alpha \in \mathbb R$ such that
$$
\sum_{j \in N(i)} x_j  = \alpha  x_i,   \qquad  i \in V.
$$ 
See \cref{f:GraphRealizations}(a) for an illustration. 
This connection was established for platonic solids and some other regular polytopes in \cite{Powers_1988,LicataPowers}. 
This observation can be used to motivate the use of ``spectral embeddings'' in data analysis, where a graph is ``embedded'' in $d$-dimensional Euclidean space using the first $d$ (non-trivial)  eigenvectors of the adjacency matrix or another matrix associated with the graph, such as the graph Laplacian. 

In this paper, we extend this result by investigating the relationship between graph realizations and the eigenvectors for certain \emph{weighted} graph Laplacians. 

\subsection*{Graph realizations}
Let $G = (V,E)$ be a connected, undirected graph with $n=|V|$ vertices and $m=|E|$ edges. 
We enumerate the vertices and edges and, when convenient, identify $V = [n]:= \{1,\ldots,n\}$ and $E=[m]$. 
For fixed $d\in \mathbb N$, a  
\emph{$d$-dimensional graph realization}\footnote{Note that a graph realization is not a topological embedding as it might not be injective. See \cref{f:GraphRealizations}(c).}  
of $G$ is a mapping $x \colon V \to \mathbb R^d$ with \emph{coordinate matrix} $X = [ x(1) \mid x(2) \mid \cdots \mid x(n)]^t \in \mathbb R^{n \times d}$. 
A graph realization is \emph{centered} if $X^t 1 = 0$. 
A $d$-dimensional graph realization is \emph{unit-distance} if  $\|x(i) - x(j)\| = 1$ for every $(i,j) \in E$. 
Some examples of unit-distance graph realizations are given in \cref{f:GraphRealizations}.
Let $B = \left[b_1 \mid  b_2 \mid \cdots \mid b_m \right]^t \in \mathbb R^{m \times n}$ 
be an associated (arc-vertex) \emph{incidence matrix}, given by 
$$
B_{k,i} = 
\begin{cases} 
1 & i = \textrm{head}(k) \\
- 1 & i = \textrm{tail}(k) \\
0 & \textrm{otherwise}
\end{cases},
$$
where the orientation of the edges are arbitrarily chosen. 
This unit-distance condition for a graph realization can be equivalently expressed in terms of the coordinate and incidence matrices by 
$$
\| X^t b_k \| = 1,  \qquad \forall k \in E. 
$$
Note that for a graph $G$, depending on the dimension $d$, there may: 
\begin{enumerate}[label=(\roman*)]
\item not exist a centered unit-distance graph realization (\eg, $d=1$ and the complete graph on 3 vertices), 
\item exist a unique (up to rotation) centered unit-distance graph realization (\eg, $d\geq1$ and the complete graph on $d+1$ vertices; see \cref{f:GraphRealizations}(d)), or 
\item there may exist a family of unit-distance graph realizations (\eg, $d=2$ and a centered polygon with $\geq 4$ vertices and unit-edge lengths; see \cref{f:GraphRealizations}(b).)  
\end{enumerate}  

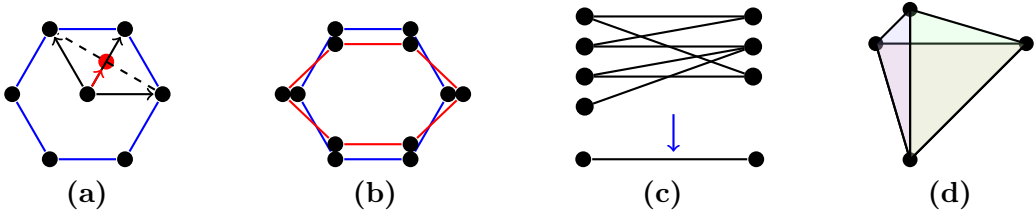
\begin{figure}[t!]
\begin{center}
\begin{tikzpicture}[thick,scale=1]
\tikzset{minimum size=6pt}

\draw (0,0) node[circle,fill,inner sep=1pt] (A){};
\draw (1,0) node[circle,fill,inner sep=1pt] (B){};
\draw (.5,0.866) node[circle,fill,inner sep=1pt] (C){};
\draw (-.5,.866) node[circle,fill,inner sep=1pt] (D){};
\draw (-1,0) node[circle,fill,inner sep=1pt] (E){};
\draw (-.5,-.866) node[circle,fill,inner sep=1pt] (F){};
\draw (.5,-.866) node[circle,fill,inner sep=1pt] (G){};
\draw (.25,.433)  node[red,circle,fill,inner sep=1pt] (H){};
          
 \draw[->] (A) -- (C);   
 \draw[dashed] (B) -- (D);
 \draw[->] (A) -- (B);
 \draw[->] (A) -- (D);
 \draw[blue] (B) -- (C);
 \draw[blue] (C) -- (D);
 \draw[blue] (D) -- (E);
 \draw[blue] (E) -- (F);
 \draw[blue] (F) -- (G);
 \draw[blue] (G) -- (B);
 \draw[thick,red,->] (A) -- (H);
        
\end{tikzpicture} \hspace{1cm}
\begin{tikzpicture}[thick,scale=1]
\tikzset{minimum size=6pt}

\draw (1,0) node[circle,fill,inner sep=1pt] (B){};
\draw (.5,0.866) node[circle,fill,inner sep=1pt] (C){};
\draw (-.5,.866) node[circle,fill,inner sep=1pt] (D){};
\draw (-1,0) node[circle,fill,inner sep=1pt] (E){};
\draw (-.5,-.866) node[circle,fill,inner sep=1pt] (F){};
\draw (.5,-.866) node[circle,fill,inner sep=1pt] (G){};
          
 \draw[blue] (B) -- (C);
 \draw[blue] (C) -- (D);
 \draw[blue] (D) -- (E);
 \draw[blue] (E) -- (F);
 \draw[blue] (F) -- (G);
 \draw[blue] (G) -- (B);

\draw (1.2,0) node[circle,fill,inner sep=1pt] (B1){};
\draw (.5,0.666) node[circle,fill,inner sep=1pt] (C1){};
\draw (-.5,.666) node[circle,fill,inner sep=1pt] (D1){};
\draw (-1.2,0) node[circle,fill,inner sep=1pt] (E1){};
\draw (-.5,-.666) node[circle,fill,inner sep=1pt] (F1){};
\draw (.5,-.666) node[circle,fill,inner sep=1pt] (G1){};
          
 \draw[red] (B1) -- (C1);
 \draw[red] (C1) -- (D1);
 \draw[red] (D1) -- (E1);
 \draw[red] (E1) -- (F1);
 \draw[red] (F1) -- (G1);
 \draw[red] (G1) -- (B1);

%
        
\end{tikzpicture} \hspace{1cm}
\begin{tikzpicture}[thick,scale=1]
\tikzset{minimum size=6pt}
\graph[nodes={draw, circle,fill,inner sep=1pt}, 
           empty nodes, branch down=.4 cm,
           grow right sep=2cm] {subgraph I_nm [V={a, b, c, d}, W={1,...,3}];
  a -- { 1,3};
  b -- { 1, 2 };
  c -- { 2,3 };
  d -- {2};
};        

\draw (0.1,-1.9) node[circle,fill,inner sep=1pt] (A){};
\draw (2.4,-1.9) node[circle,fill,inner sep=1pt] (B){};

 \draw[blue,->] (1.3,-1.3) -- (1.3,-1.8);
 \draw[black] (A) -- (B);
 
\end{tikzpicture} \hspace{1cm}
\begin{tikzpicture}[line join = round, line cap = round,thick]
\pgfmathsetmacro{\factor}{1/sqrt(2)};
\coordinate [] (A) at (1,0,-1*\factor);
\coordinate [] (B) at (-1,0,-1*\factor);
\coordinate [] (C) at (0,1,1*\factor);
\coordinate [] (D) at (0,-1,1*\factor);

\draw (A) node[circle,fill,inner sep=2pt] (){};
\draw (B) node[circle,fill,inner sep=2pt] (){};
\draw (C) node[circle,fill,inner sep=2pt] (){};
\draw (D) node[circle,fill,inner sep=2pt] (){};

\draw[] (A)--(D)--(B)--cycle;
\draw[-, fill=red!30, opacity=.2] (A)--(D)--(B)--cycle;

\draw[] (A) --(D)--(C)--cycle;
\draw[-, fill=green!30, opacity=.2] (A) --(D)--(C)--cycle;

\draw[] (B)--(D)--(C)--cycle;
\draw[-, fill=blue!30, opacity=.2] (B)--(D)--(C)--cycle;
\end{tikzpicture} \\
{\bf (a)} \hspace{3cm} {\bf (b)} \hspace{3cm} {\bf (c)} \hspace{3cm} {\bf (d)}
\caption{
{\bf (a)} For any vertex of a regular polygon or polyhedron, averaging the adjacent vertices gives a collinear vector.  
{\bf (b)} Two centered unit-distance realizations of the 6-cycle graph. The regular hexagon is the solution to \eqref{e:MaxGraphRealization}. 
{\bf (c)} A two point graph realization of a bipartite graph.
{\bf (d)} A tetrahedron is a 3-dimensional unit-distance graph realization of the 4-vertex complete graph. }
\label{f:GraphRealizations}
\end{center}
\end{figure}

In case (iii) above, when the unit-distance graph realization is not unique, we may  consider the realization with maximal \emph{total variance}, 
$ \| X \|_F^2 = \sum_{i\in V} \|x(i) \|^2.$
Thus, for fixed graph $G$ and dimension $d$, we consider the nonlinear optimization problem of finding the centered unit-distance graph realization with maximum total variance,
\begin{subequations}
\label{e:MaxGraphRealization}
\begin{align}
\max_{X \in \mathbb R^{n \times d}} \ & \| X \|_F^2 \\
\textrm{s.t.} \ & \| X^t b_k \|^2 = \phi_k,  \qquad \forall k \in E \\ 
& X^t 1 = 0.
\end{align}
\end{subequations}
Here, we have slightly generalized the  problem by introducing a vector $\phi \in \mathbb R^m_+ := \{ \phi \in \mathbb R^m \colon \phi_i \geq 0, \ \forall i \in [m]\}$ which specifies the squared edge-lengths; $\phi = 1$ in the case that we seek a unit-distance graph. 
In \cref{f:graphZoo}, we plot a variety of graph realizations corresponding to solutions of \eqref{e:MaxGraphRealization} with $\phi = 1$. We'll explain how we computed these solutions momentarily. 

\begin{figure}[th!]
\begin{center}
\includegraphics[width=0.27\textwidth]{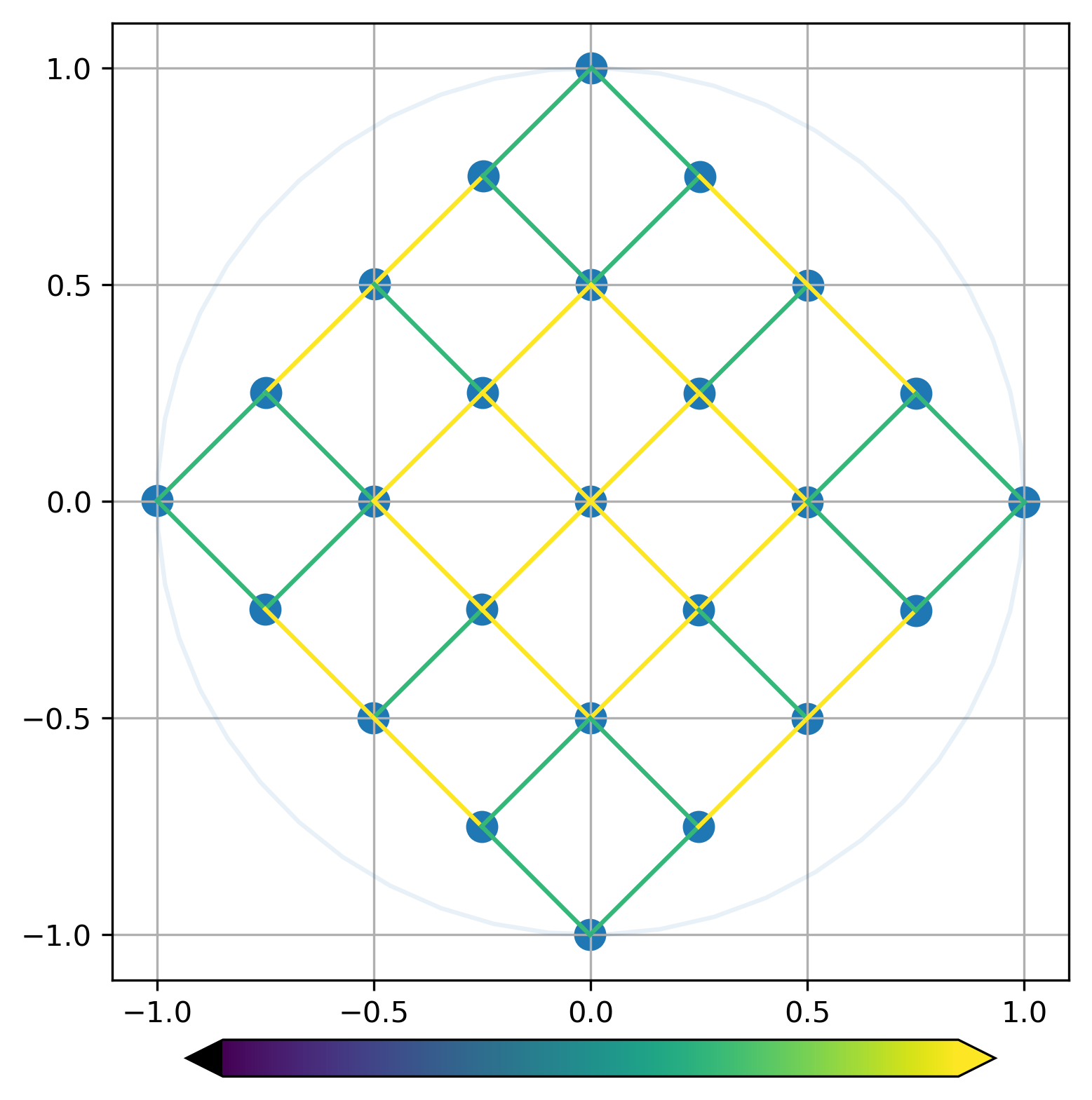}
\includegraphics[width=0.27\textwidth]{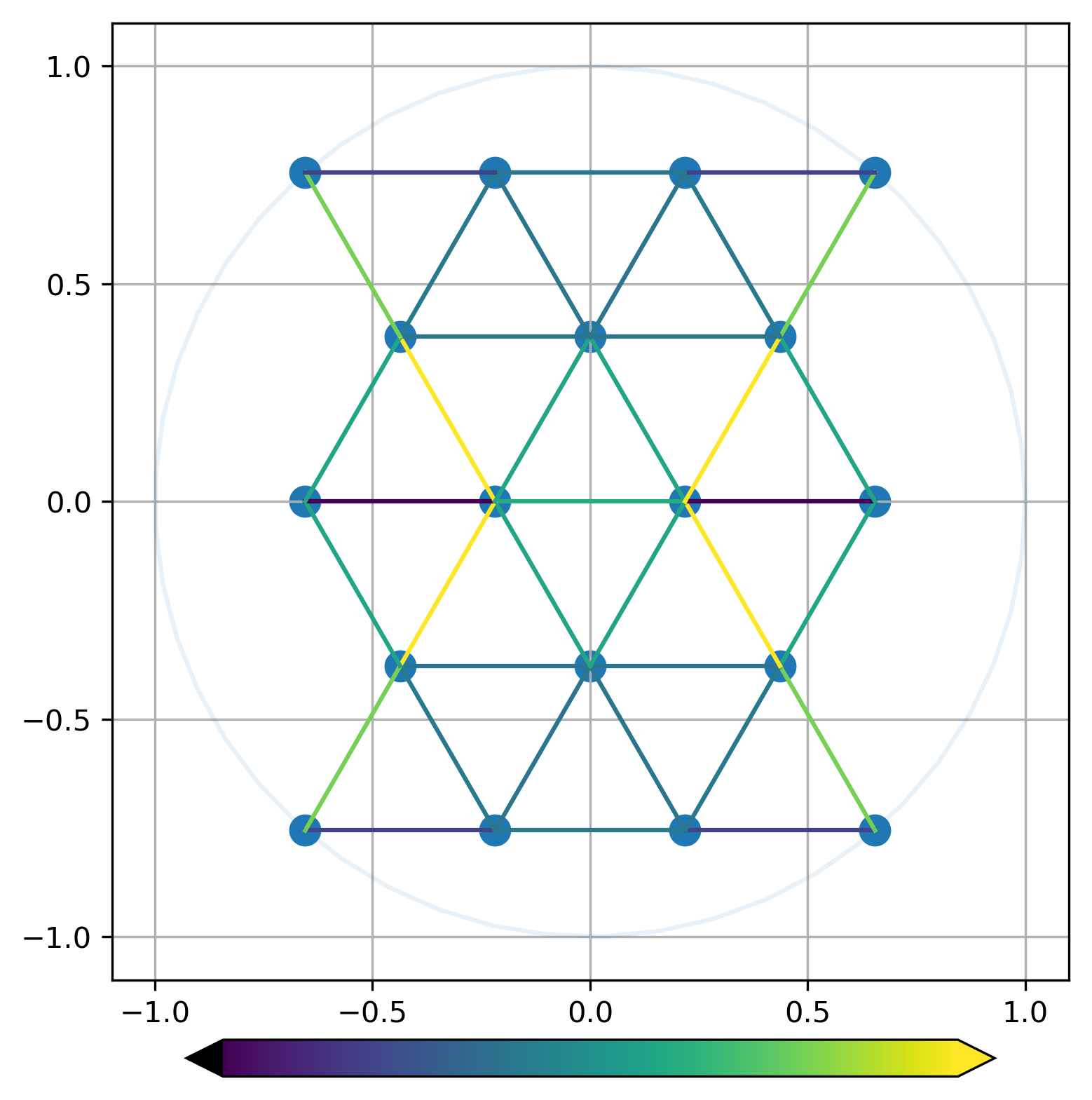}
\includegraphics[width=0.27\textwidth]{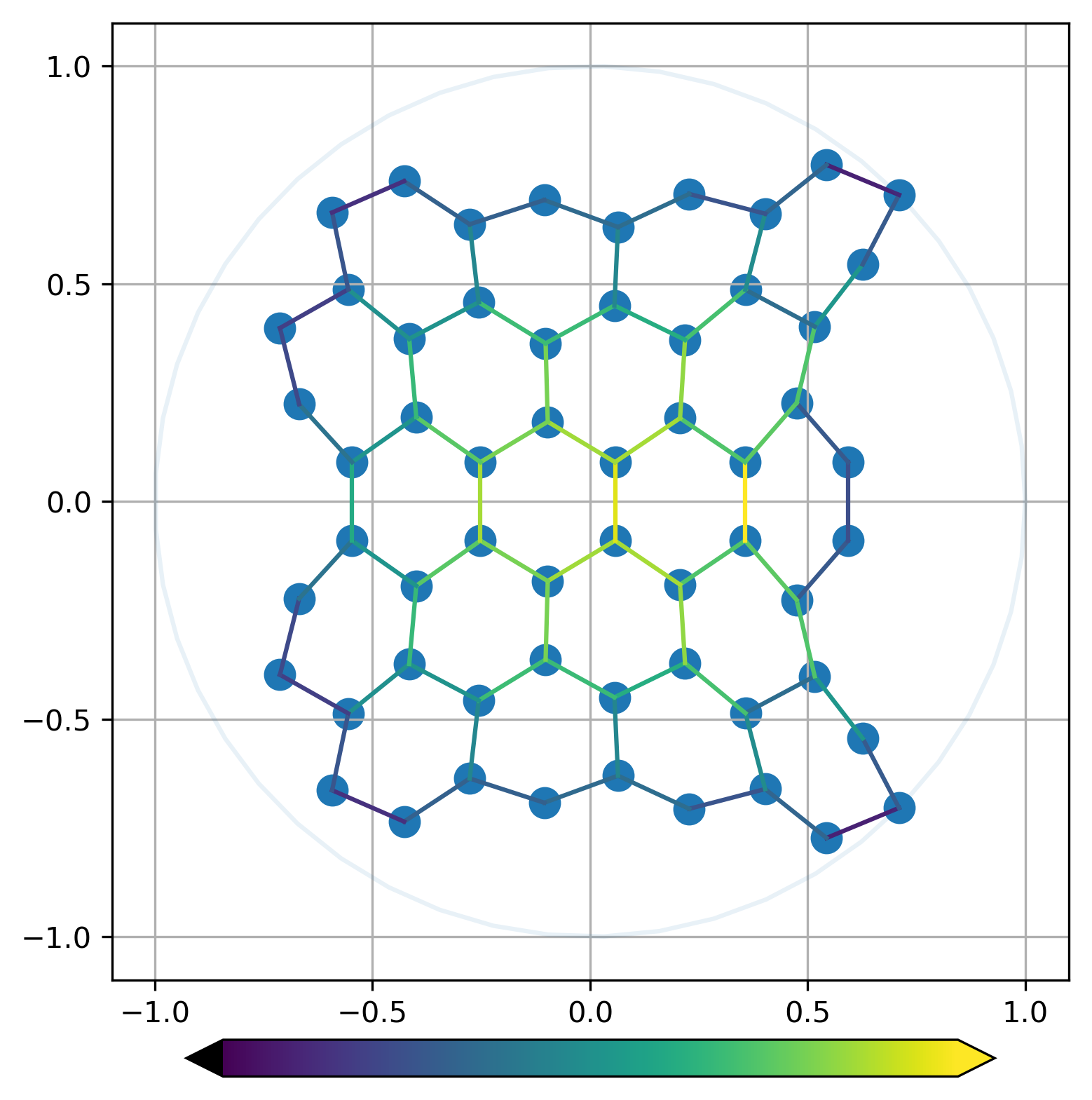} \\
\includegraphics[width=0.27\textwidth]{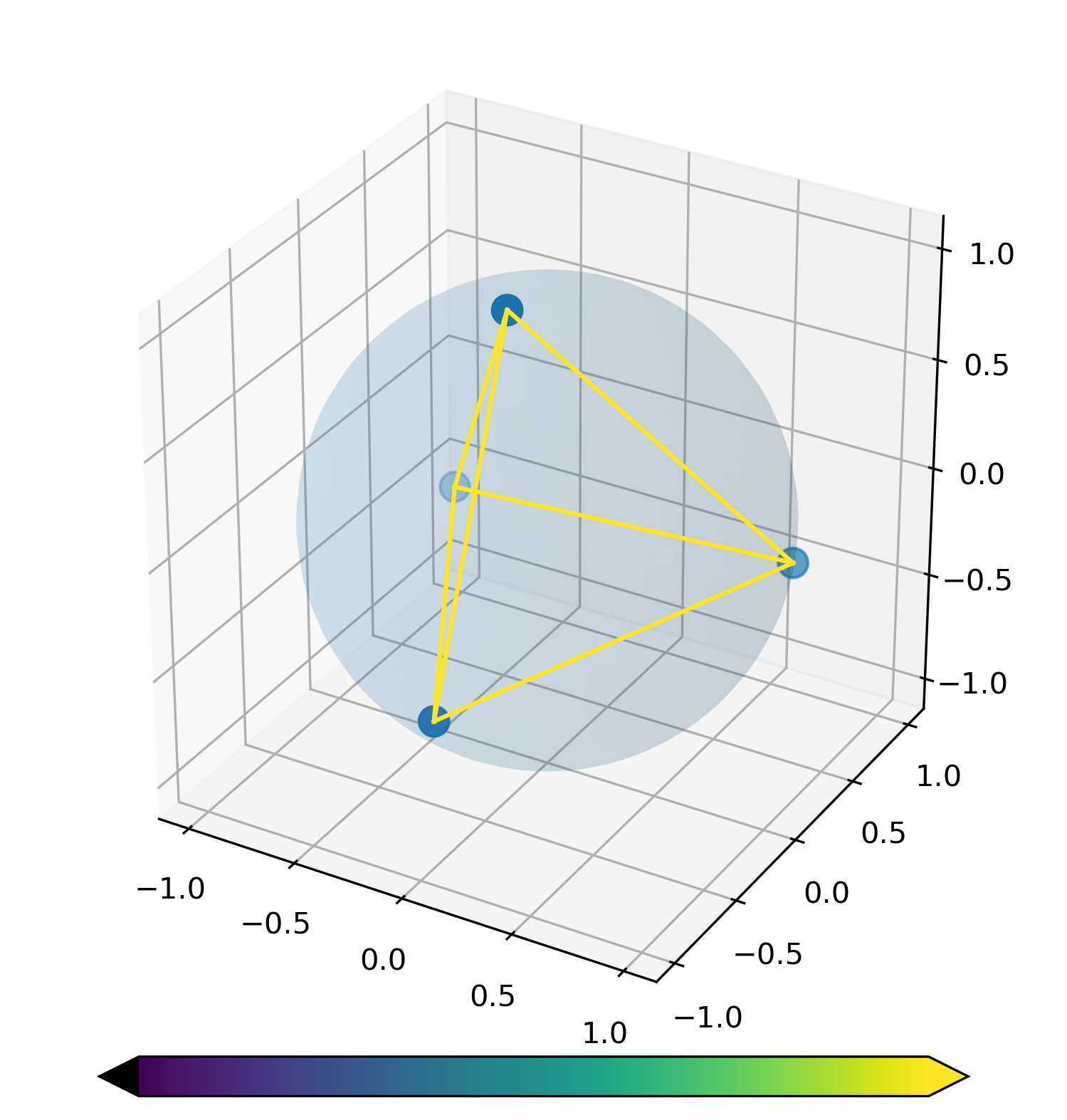}
\includegraphics[width=0.27\textwidth]{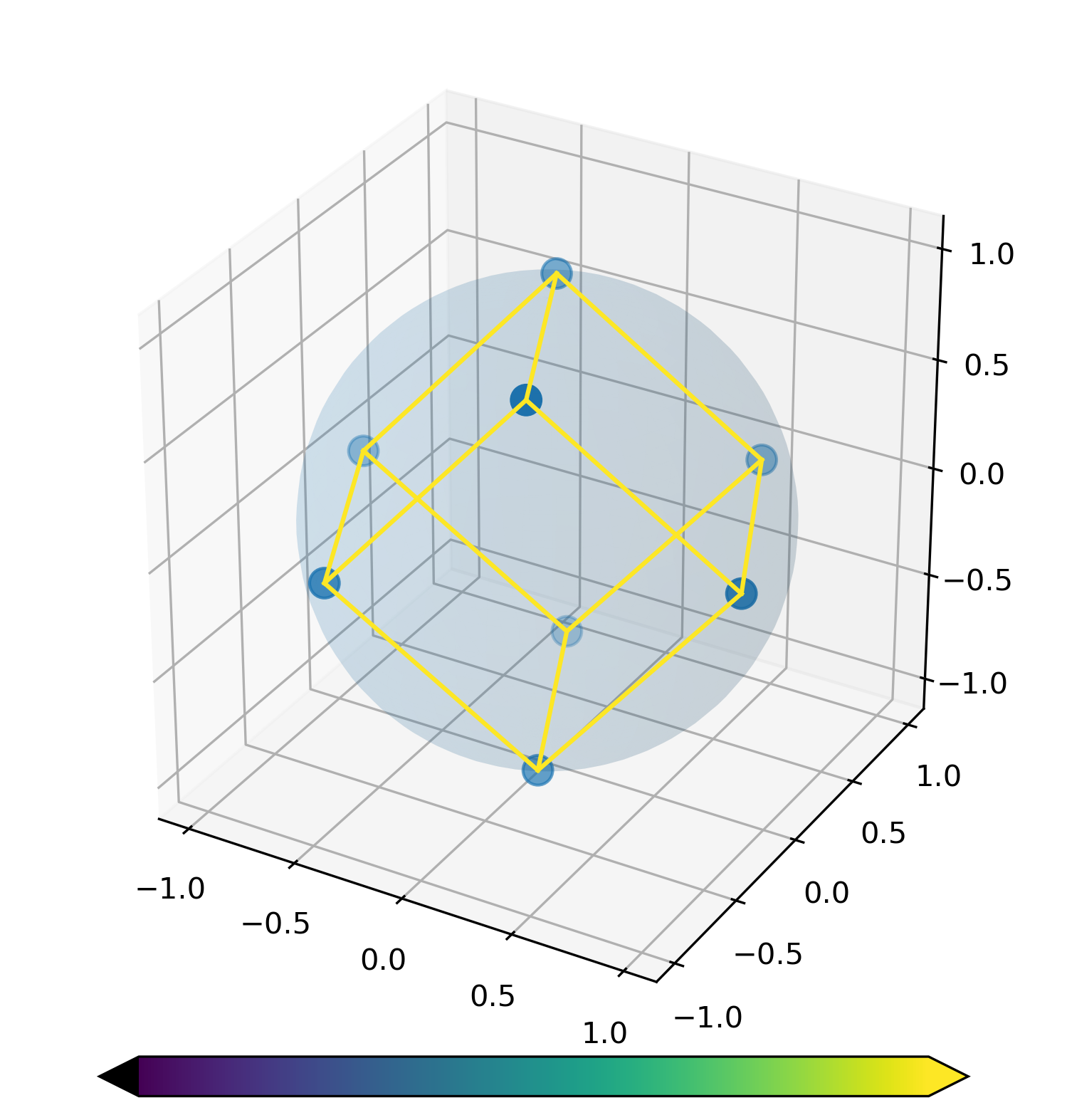}
\includegraphics[width=0.27\textwidth]{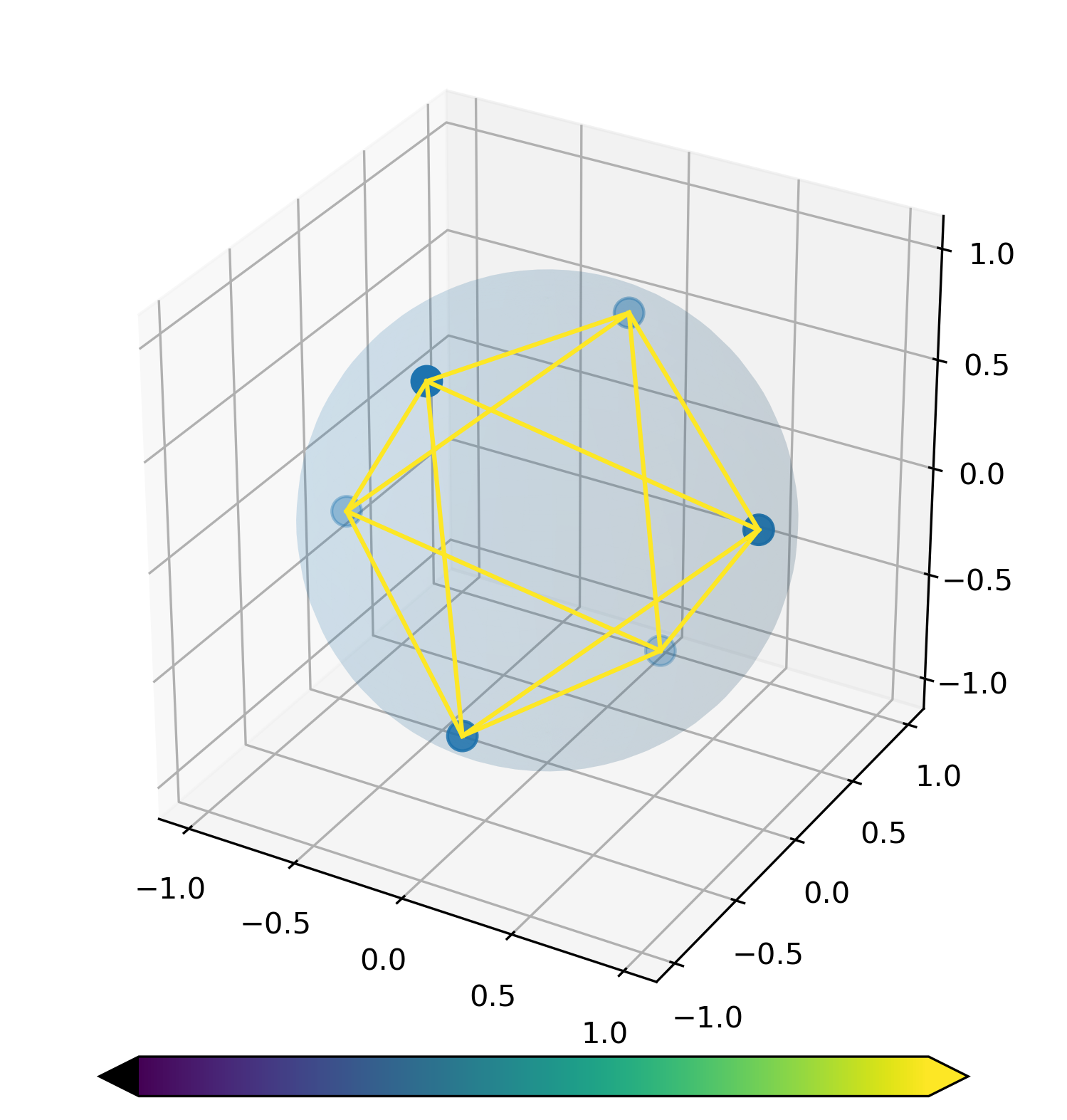} \\
\includegraphics[width=0.27\textwidth]{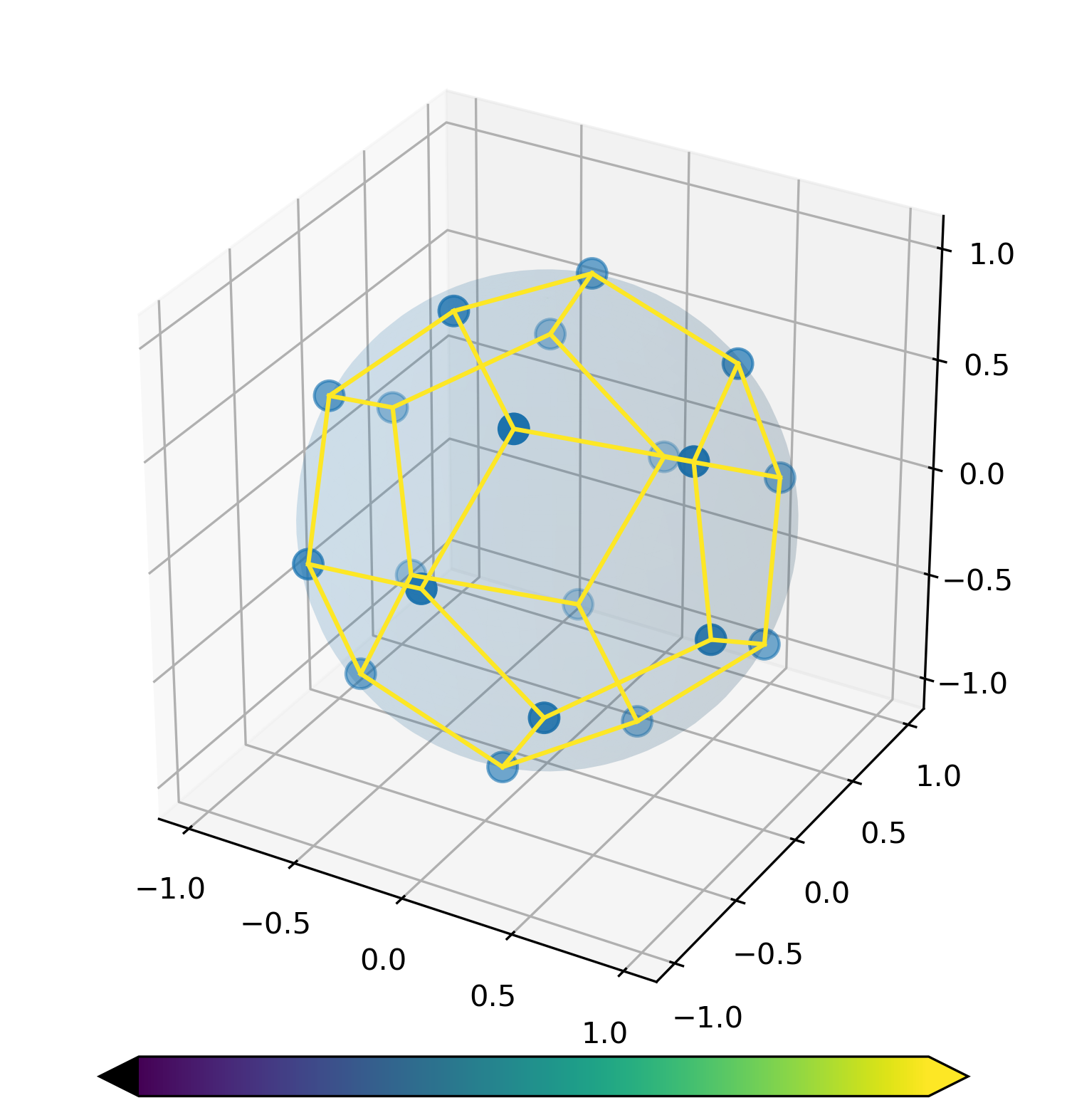} 
\includegraphics[width=0.27\textwidth]{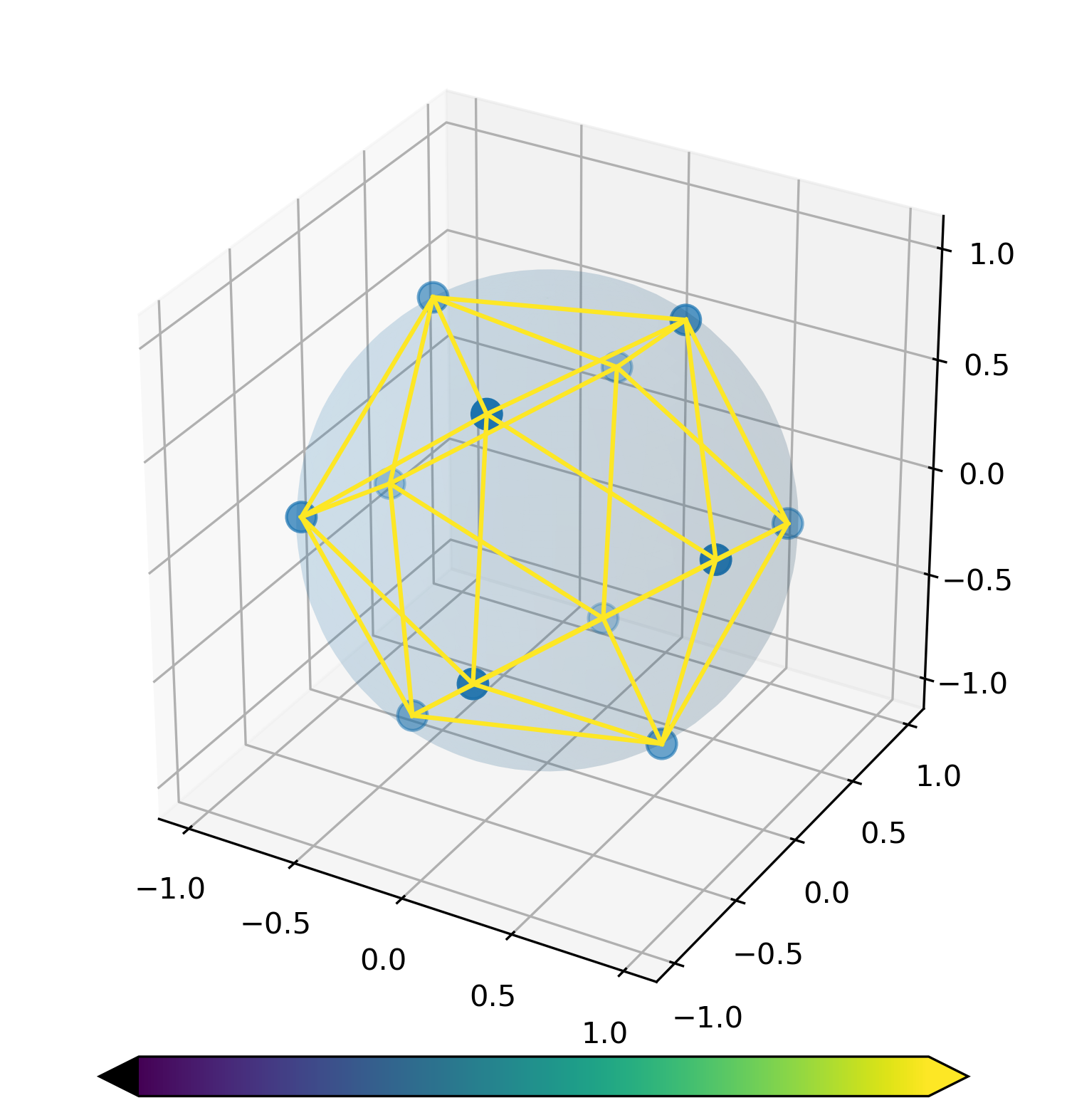}
\includegraphics[width=0.27\textwidth]{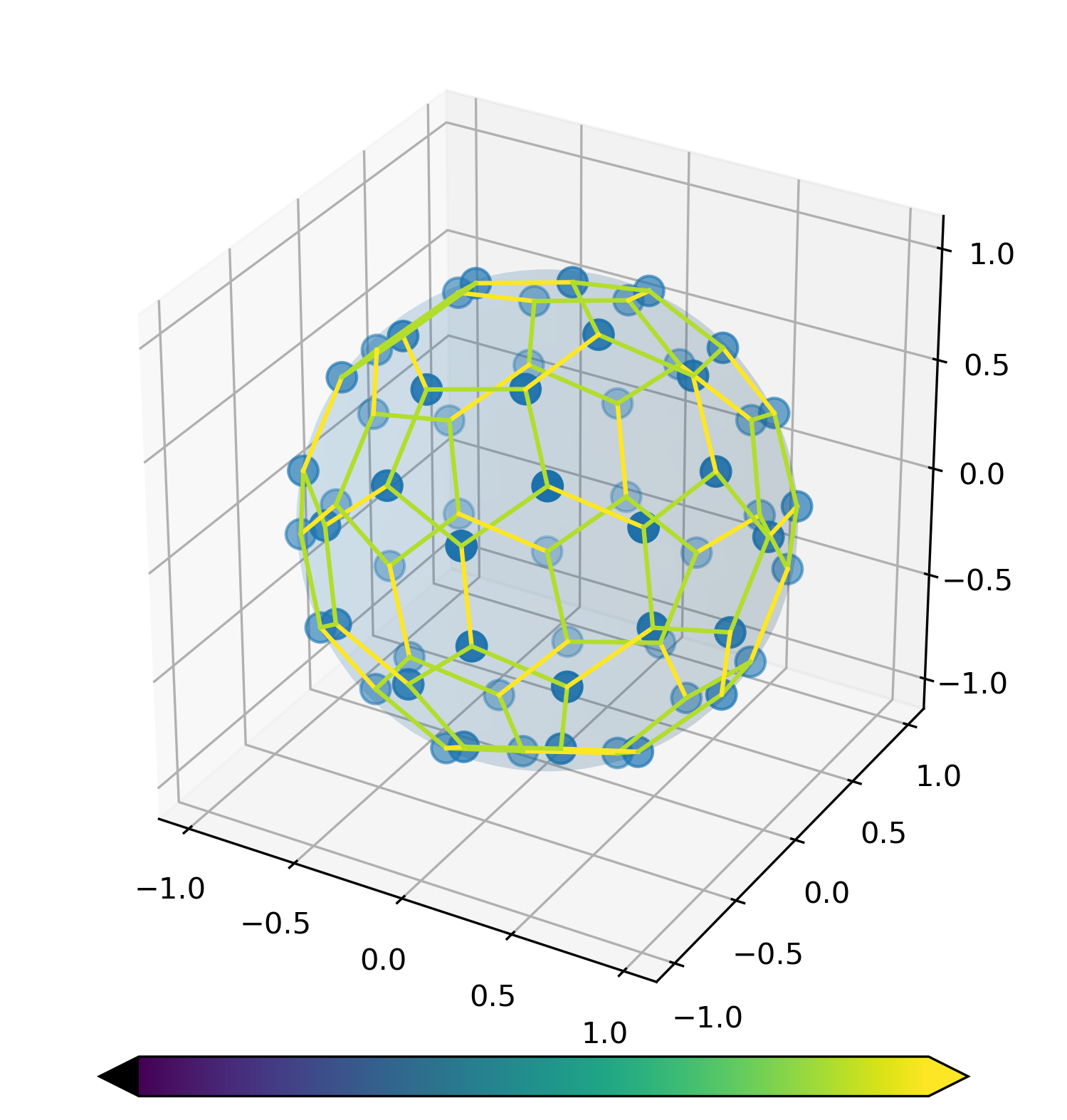} \\
\includegraphics[width=0.27\textwidth]{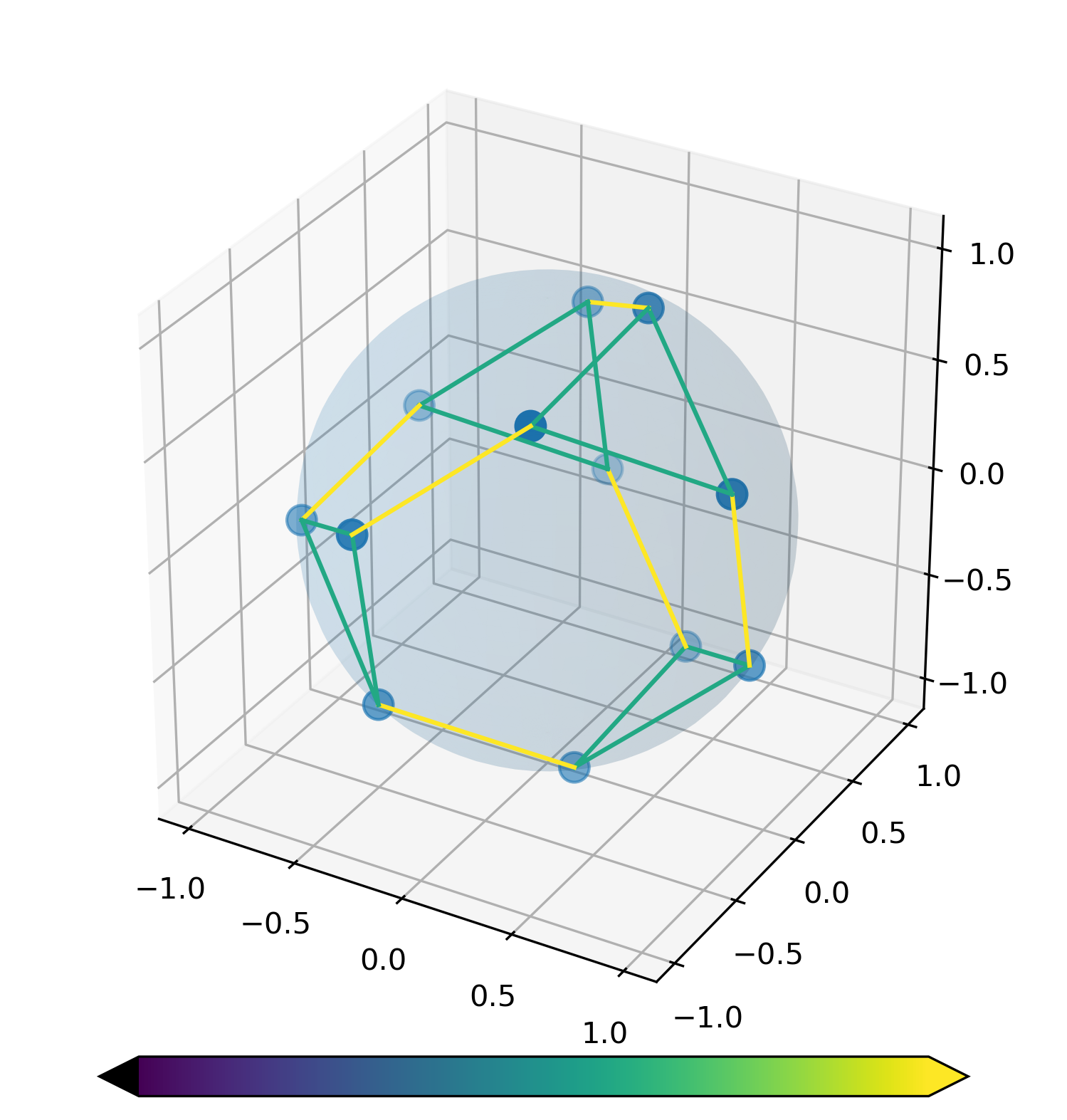}
\includegraphics[width=0.27\textwidth]{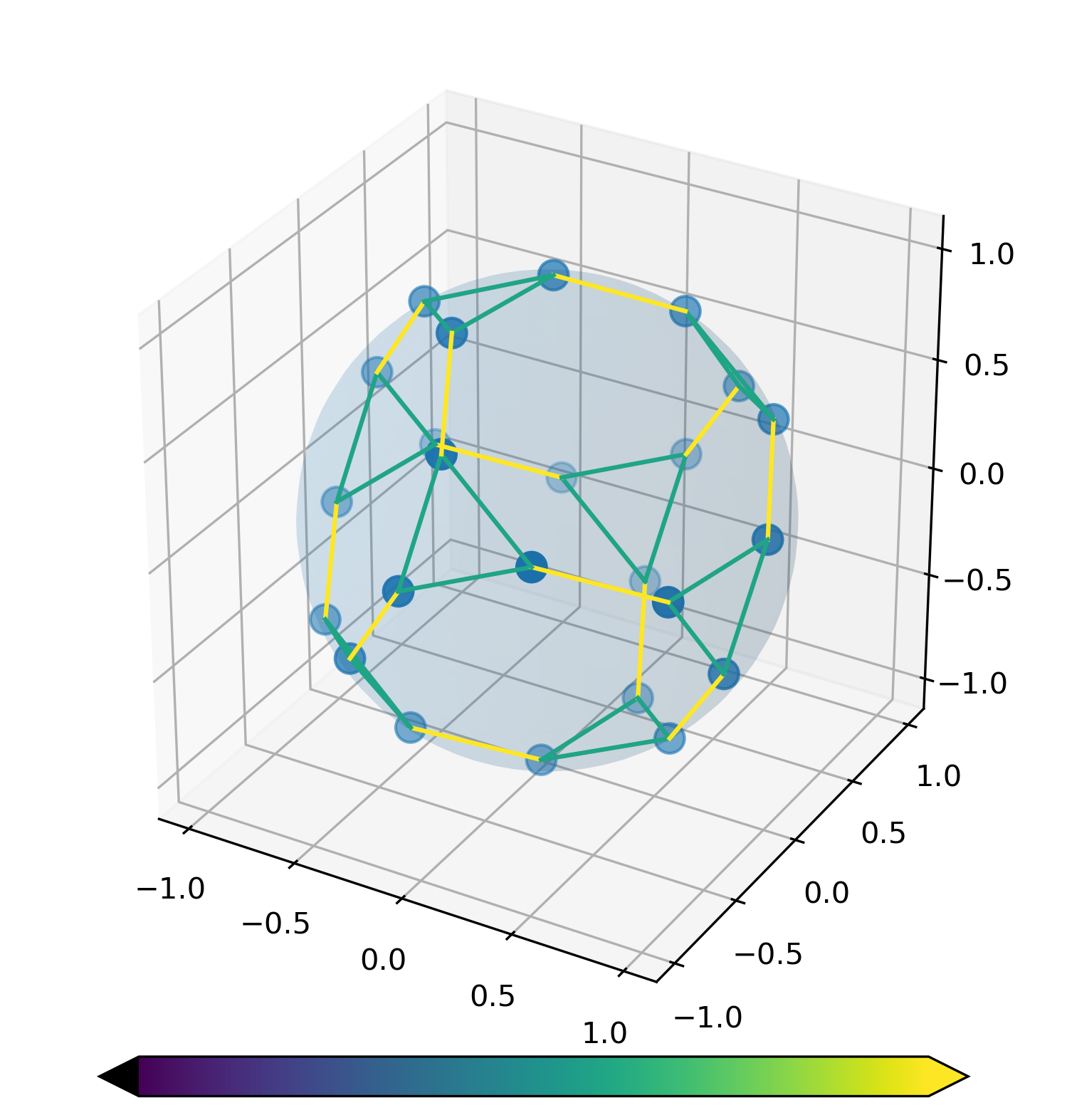}
\includegraphics[width=0.27\textwidth]{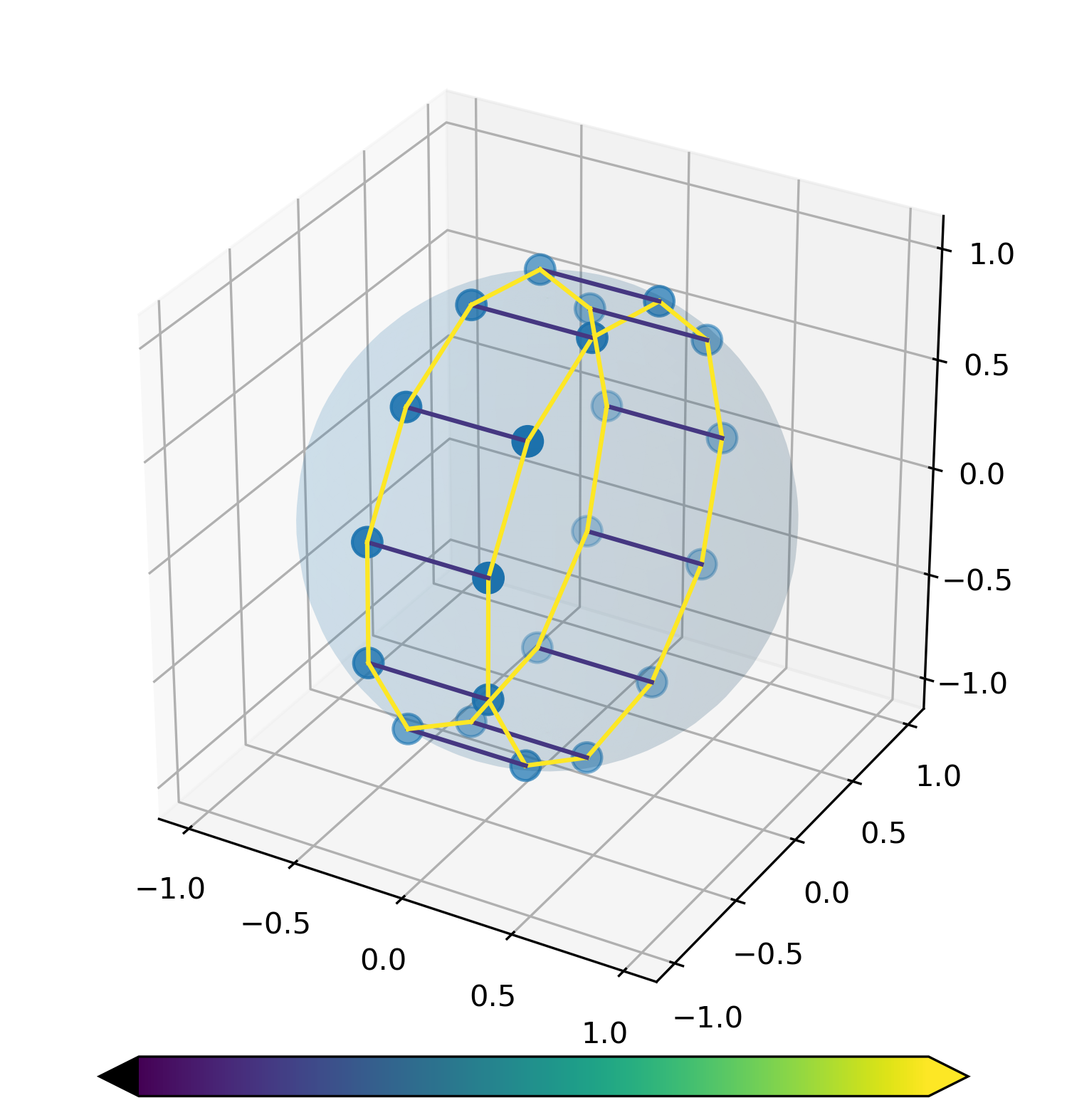}
\caption{Maximal spectral realizations solving \eqref{e:MaxGraphRealization2} for a variety of graphs: 
square lattice graph, 
triangular lattice graph, 
hexagonal lattice graph, 
tetrahedral graph, 
cube graph, 
dodecahedral graph, 
icosahedral graph, 
octahedral graph, 
buckyball graph, 
truncated tetrahedron graph, 
truncated cube graph, and 
circular ladder graph. 
All plots have been scaled so that the largest magnitude vertex has magnitude 1. 
See \cref{s:NumExamples:A} for more details.}
\label{f:graphZoo}
\end{center}
\end{figure}

To accommodate case (i) above, when there does not exist a realization satisfying the edge-length constraint, it will be convenient to relax the constraint and also consider the optimization problem
\begin{subequations}
\label{e:MaxGraphRealization2}
\begin{align}
\max_{X \in \mathbb R^{n \times d}} \ & \| X \|_F^2 \\
\textrm{s.t.} \ & \| X^t b_k \|^2 \leq \phi_k,  \qquad \forall k \in E \\ 
& X^t 1 = 0.
\end{align}
\end{subequations}
Note that the constraint set for \eqref{e:MaxGraphRealization2} is non-empty for any graph $G$ and choice of dimension $d$.  
For a given graph $G$ and $\phi \in \mathbb R^m_+$, we say that the solution $X^\star \in \mathbb R^{n \times d}$ to  \eqref{e:MaxGraphRealization2} is a \emph{maximal $d$-dimensional realization of G} or \emph{maximal graph realization} when the dimension and graph are understood.

\subsection*{The weighted graph Laplacian and spectral graph realizations}
Let us recall that a popular method for constructing a graph realization is to use eigenvectors of a matrix corresponding to the graph, such as the (weighted) graph Laplacian. 
Given an incidence matrix, $B$, for a graph $G=(V,E)$, the \emph{graph Laplacian} is  $\Delta_1 = B^t B \in \mathbb S_+^{n}:=\{A \in \mathbb R^{n \times n} \colon A = A^t \}$\footnote{The graph Laplacian we consider here is referred to as the \emph{unnormalized} graph Laplacian. It can also be written $\Delta_1 = D - A$, where $A$ is the vertex adjacency matrix and $D = \textrm{diag}(d)$ is the diagonal matrix with the degrees $d = A1$ on the diagonal.}. 
For weights $w \in \mathbb R^m_{+}$, the  $w$-\emph{weighted graph Laplacian}, is given by
\begin{equation}
\label{e:graphLap}
\Delta_w = B^t \textrm{diag}(w) B  \in \mathbb S_+^{n}. 
\end{equation}
Spectral properties of $\Delta_w$ have been well-studied \cite{Mohar91thelaplacian,Chung_1996,Biyiko_u_2007}. 
We enumerate the eigenvalues of $\Delta_w$ in increasing order, $0 = \lambda_1( \Delta_w)  \leq \lambda_2( \Delta_w)  \leq \cdots \leq \lambda_n( \Delta_w)$. 
Denote the corresponding normalized eigenvectors by  $\{u_i\}_{i \in [n]}$ (arbitrarily choosing vectors from the eigenspace in the case of eigenvalue multiplicity $\geq 2$) and note that $u_1$ is a constant vector. 
A \emph{$d$-dimensional $w$-spectral graph realization of $G$} is given by the rows of the coordinate matrix 
$$
X = [u_2 \mid \cdots \mid u_{d+1}] \in \mathbb R^{n \times d}.
$$
This spectral graph realization (or a variant thereof) is the first step in spectral clustering \cite{von_Luxburg_2007} and is also commonly used for network visualization \cite{Traud_2009}.

In this paper, we consider how the $w$-spectral graph realization of $G$ depends on the choice of weights $w \in \mathbb R^m_{+}$. 
In particular, for a fixed graph $G=(V,E)$ and $\phi \in \mathbb R^m_+$, we consider the eigenvalue  optimization problem 
\begin{subequations}
\label{e:EOP}
\begin{align}
\max_{w\in \mathbb R^m } \ &  \lambda_2(\Delta_w) \\ 
\textrm{s.t.} \ & \Delta_w u_i = \lambda_i   u _i , \qquad \forall i \in [n] \\ 
& w \geq 0, \quad w^t \phi = 1. 
\end{align}
\end{subequations}
This problem is (i.e., can be formulated as) a convex optimization problem  \cite{Ghosh_2006,Ghosh_2006b,JMLR:v15:osting14a}. 
Our goal is to investigate whether the $w^\star$-spectral graph realization, where the weight, $w^\star$ is the solution to  \eqref{e:EOP},  might have special properties.

\subsection*{Results}
We first show that the optimization problem in \eqref{e:MaxGraphRealization2} is well-posed.
\begin{thm} \label{t:well-posed}
The optimization problem in \eqref{e:MaxGraphRealization2} is well-posed; there exists an admissible $X^\star$ that attains the supremum value of the objective over the constraint set. Furthermore, if $w \in \mathbb R^m_+$ satisfies $w^t \phi = 1$, 
\begin{equation} \label{e:objFunBnd}
\|X^\star\|_F^2  \leq \frac{1 }{\lambda_2 (\Delta_w) }. 
\end{equation}
\end{thm}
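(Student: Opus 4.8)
The plan is to prove the two assertions separately: existence by a compactness argument, and the bound \eqref{e:objFunBnd} by a weighted Rayleigh-quotient estimate that in fact also supplies the compactness.

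First I would record the identity underlying everything. Since the $k$-th row of $BX$ is $b_k^t X$, for any $w \in \mathbb R^m_+$ we have $\sum_{k\in E} w_k\,\|X^t b_k\|^2 = \|\,\mathrm{diag}(\sqrt w)\,BX\|_F^2 = \operatorname{tr}\!\big(X^t B^t \mathrm{diag}(w) B X\big) = \operatorname{tr}(X^t \Delta_w X)$, with the special case $\sum_{k}\|X^t b_k\|^2 = \operatorname{tr}(X^t \Delta_1 X)$ when $w = 1$. For existence, let $\mathcal C$ denote the feasible set of \eqref{e:MaxGraphRealization2}. It is nonempty ($X=0\in\mathcal C$ because $\phi\ge 0$) and closed (an intersection of preimages of closed sets under continuous maps). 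For boundedness, note that for $X\in\mathcal C$ the identity gives $\operatorname{tr}(X^t\Delta_1 X)=\sum_k\|X^tb_k\|^2\le\sum_k\phi_k=\|\phi\|_1$; on the other hand, since $G$ is connected, $\ker\Delta_1=\mathrm{span}(1)$ and $\lambda_2(\Delta_1)>0$, so applying the variational characterization of $\lambda_2(\Delta_1)$ to each column of $X$ (each column lies in $\{1\}^\perp$ because $X^t1=0$) yields $\operatorname{tr}(X^t\Delta_1 X)\ge\lambda_2(\Delta_1)\|X\|_F^2$. Hence $\|X\|_F^2\le\|\phi\|_1/\lambda_2(\Delta_1)$ on $\mathcal C$, so $\mathcal C$ is compact and the continuous objective $X\mapsto\|X\|_F^2$ attains its maximum there, producing the claimed $X^\star$.

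For \eqref{e:objFunBnd}, I would fix $w\in\mathbb R^m_+$ with $w^t\phi=1$; if $\lambda_2(\Delta_w)=0$ the inequality is vacuous, so assume $\lambda_2(\Delta_w)>0$. Feasibility of $X^\star$ together with $w\ge 0$ gives, via the identity above, $\operatorname{tr}(X^{\star t}\Delta_w X^\star)=\sum_{k}w_k\|X^{\star t}b_k\|^2\le\sum_k w_k\phi_k=w^t\phi=1$. Conversely, $\ker\Delta_w=\mathrm{span}(1)$ and the Courant–Fischer characterization of $\lambda_2(\Delta_w)$, applied column-by-column to $X^\star$ (again using $X^{\star t}1=0$), gives $\operatorname{tr}(X^{\star t}\Delta_w X^\star)\ge\lambda_2(\Delta_w)\|X^\star\|_F^2$. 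Combining the two inequalities yields $\lambda_2(\Delta_w)\|X^\star\|_F^2\le 1$, which is exactly \eqref{e:objFunBnd}.

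I do not expect a serious obstacle here; the only care required is in the bookkeeping — verifying the trace identity $\sum_k w_k\|X^tb_k\|^2=\operatorname{tr}(X^t\Delta_w X)$, invoking connectivity of $G$ to get $\lambda_2(\Delta_1)>0$ (which is what upgrades $\mathcal C$ from closed to compact) and to identify $\ker\Delta_w$ with $\mathrm{span}(1)$, and handling the degenerate case $\lambda_2(\Delta_w)=0$ (where $w$ does not weight a connected spanning subgraph) separately so that one never divides by zero.
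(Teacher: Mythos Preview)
Your proposal is correct and follows essentially the same route as the paper: the key trace identity $\sum_k w_k\|X^tb_k\|^2=\operatorname{tr}(X^t\Delta_w X)$, the Rayleigh bound $\operatorname{tr}(X^t\Delta_w X)\ge\lambda_2(\Delta_w)\|X\|_F^2$ using $X^t1=0$, and then compactness via Bolzano--Weierstrass. You are simply more explicit than the paper about closedness of the feasible set, the role of connectivity in making $\lambda_2(\Delta_1)>0$, and the degenerate case $\lambda_2(\Delta_w)=0$; the paper compresses all of this into a one-line chain of (in)equalities followed by an appeal to Bolzano--Weierstrass.
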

A proof of  \cref{t:well-posed} will be given in \cref{s:Proofs}. 
We will see that the inequality in \eqref{e:objFunBnd} is a weak duality result for \eqref{e:MaxGraphRealization2} and \eqref{e:EOP}. 
Our main result is that the solution to \eqref{e:EOP} can be used to generate a maximal graph realization. 

\begin{thm} \label{t:UnitDistGraphReal}
Suppose $w^\star \geq 0$ solves \eqref{e:EOP} and the maximum eigenvalue $\lambda^\star$ has a $d$-dimensional eigenspace $E_{\lambda^\star}$. Then there exist orthogonal eigenvectors $x_\ell \in E_{\lambda^\star}$, $\ell \in [d]$ such that the $d$-dimensional graph realization $x\colon V \to \mathbb  R^d$ with coordinate matrix $X = (x_1 | \cdots | x_d) \in \mathbb R^{n \times d} $  
is a maximal graph realization solving \eqref{e:MaxGraphRealization2}. 
In particular, the graph realization  is centered, \ie, 
$\sum_{i \in [n]} x_\ell(i) = 0, \ \forall \ell \in [d]$ 
and satisfies the edge-length constraints, \ie, 
$\|x(i) - x(j) \|^2 = \phi_k, \ \forall \ k = (i,j) \in E$. 
Moreover, if  $w^\star > 0$, then $X \in \mathbb R^{n \times d}$ is a solution to  \eqref{e:MaxGraphRealization}.
\end{thm}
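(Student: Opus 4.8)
The plan is to prove \cref{t:UnitDistGraphReal} by convex duality, recognizing \eqref{e:MaxGraphRealization2} as (essentially) the Lagrangian dual of \eqref{e:EOP}. First I would rewrite \eqref{e:EOP} as the semidefinite program $\max\{t : \Delta_w \succeq tP,\ w \ge 0,\ w^t\phi = 1\}$ in the variables $(w,t)$, where $P = I - \tfrac1n 11^t$ is the orthogonal projection onto $1^\perp$; this is valid since $\lambda_2(\Delta_w) \ge t$ iff $\Delta_w - tP \succeq 0$ (both matrices having $1$ in their kernel). Introducing a multiplier $Y \succeq 0$ for the matrix inequality, $\mu \ge 0$ for $w \ge 0$, and $\nu$ for $w^t\phi = 1$, and using $\langle Y, \Delta_w\rangle = \langle BYB^t,\operatorname{diag}(w)\rangle = \sum_k w_k\, b_k^t Y b_k$, a short computation collapses the Lagrangian to the dual program $\min\{\nu : Y \succeq 0,\ \operatorname{tr}(YP) = 1,\ b_k^t Y b_k \le \nu\,\phi_k \ \forall k\in E\}$. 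After the rescaling $Z = Y/\nu$ this reads $\max\{\operatorname{tr}(ZP) : Z \succeq 0,\ b_k^t Z b_k \le \phi_k\ \forall k\}$ with optimal value $1/\lambda^\star$, which is exactly the standard semidefinite relaxation of \eqref{e:MaxGraphRealization2} under $Z = XX^t$ (so that $b_k^t Z b_k = \|X^t b_k\|^2$, and $\operatorname{tr}(ZP) = \|X\|_F^2$ once $X^t1 = 0$).

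Next I would establish strong duality and attainment of the dual optimum. The only wrinkle is that Slater's condition fails verbatim, because $\Delta_w - tP$ is never strictly positive definite; the fix is to restrict everything to the $(n-1)$-dimensional subspace $1^\perp$ (replace $\Delta_w - tP$ by $Q^t\Delta_w Q - tI_{n-1}$ for $Q$ an orthonormal basis of $1^\perp$), where a strictly feasible point exists provided $\phi \neq 0$: pick any $w > 0$ with $w^t\phi = 1$ — then $\Delta_w$ is positive definite on $1^\perp$ because $G$ is connected — and take $t$ slightly below $\lambda_2(\Delta_w)$. Strong duality then yields a dual optimizer $Y^\star \succeq 0$, and $(w^\star, t^\star = \lambda^\star, Y^\star)$ satisfy complementary slackness: $\langle Y^\star, \Delta_{w^\star} - \lambda^\star P\rangle = 0$ and $w^\star_k\,(\lambda^\star\phi_k - b_k^t Y^\star b_k) = 0$ for every $k\in E$.

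The crux — and the step I expect to be the main obstacle — is to show the dual optimizer can be chosen with range inside the eigenspace $E_{\lambda^\star}$, so that its symmetric factorization has rank at most $d$. From the first complementary slackness condition and positive semidefiniteness, $\operatorname{range}(Y^\star) \subseteq \ker(\Delta_{w^\star} - \lambda^\star P) = \operatorname{span}(1)\oplus E_{\lambda^\star}$; here I use that on $1^\perp$ the operator is $\Delta_{w^\star} - \lambda^\star I$ and that, $\lambda^\star = \lambda_2(\Delta_{w^\star})$ being positive (again by connectedness of $G$, via the uniform weighting), its kernel there is precisely $E_{\lambda^\star}$. Then I would replace $Y^\star$ by $P Y^\star P$: it is still dual feasible and optimal, since $\operatorname{tr}(PY^\star P\,P) = \operatorname{tr}(Y^\star P) = 1$ and $b_k^t PY^\star P b_k = b_k^t Y^\star b_k$ because $Pb_k = b_k$ (every row of $B$ sums to zero), and then complementary slackness is automatic from optimality of both $(w^\star,\lambda^\star)$ and $PY^\star P$. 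This kills any $\operatorname{span}(1)$ component, leaving $\operatorname{range}(Y^\star) \subseteq E_{\lambda^\star}$.

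Finally I would assemble the realization. Set $Z^\star = Y^\star/\lambda^\star$ and factor $Z^\star = XX^t$ with the columns of $X$ taken orthogonal, equal to $\sqrt{\sigma_\ell}\,v_\ell$ for the nonzero eigenpairs $(\sigma_\ell, v_\ell)$ of $Z^\star$, hence lying in $E_{\lambda^\star}$; pad with zero columns to obtain $X \in \mathbb R^{n\times d}$. Because $E_{\lambda^\star}\perp 1$, the realization is centered; $\|X^t b_k\|^2 = b_k^t Z^\star b_k \le \phi_k$, so it is feasible for \eqref{e:MaxGraphRealization2}; and $\|X\|_F^2 = \operatorname{tr}(Z^\star) = \operatorname{tr}(Z^\star P) = 1/\lambda^\star$, which by \cref{t:well-posed} applied with $w = w^\star$ is the maximal value — so $X$ solves \eqref{e:MaxGraphRealization2}. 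For the last assertion, if $w^\star > 0$ then the second complementary slackness condition forces $b_k^t Z^\star b_k = \phi_k$, i.e. $\|x(i)-x(j)\|^2 = \phi_k$ on every edge, so $X$ is feasible for the equality-constrained problem \eqref{e:MaxGraphRealization} and, attaining the same bound $1/\lambda^\star$, is optimal there as well.
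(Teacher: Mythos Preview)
Your proposal is correct and follows essentially the same route as the paper: reformulate \eqref{e:EOP} as an SDP with $P=I-\tfrac1n 11^t$, derive the Lagrange dual, invoke strong duality, use complementary slackness to force $\operatorname{range}(Y^\star)\subseteq E_{\lambda^\star}$, and then factor $Y^\star/\lambda^\star=XX^t$. Two small points where you are actually more careful than the paper: the paper simply asserts Slater's condition holds (writing $\Delta_{\hat w}\succ \hat t J$, which cannot be strict since $1$ lies in the kernel), whereas your subspace-restriction fix is the rigorous way to handle it; and where the paper removes the $\operatorname{span}(1)$ component by adjoining the redundant constraint $1^tY1=0$ to the dual, you instead project $Y^\star\mapsto PY^\star P$, which is an equivalent and equally clean maneuver.
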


A proof of  \cref{t:UnitDistGraphReal} will be given in \cref{s:Proofs} and depends on the duality between \eqref{e:MaxGraphRealization2} and  \eqref{e:EOP}. 
The graphs in \cref{f:graphZoo} were generated using \cref{t:UnitDistGraphReal}; see \cref{s:NumExamples}.  
However, there are a couple of practical limitations of \cref{t:UnitDistGraphReal} relevant to its use the application of generating a unit-distance graph realization. For a given graph, before solving \eqref{e:EOP}, we do not know 
(i) the dimension $d = \dim E_{\lambda^\star}$ of the spectral graph realization or
(ii) if the assumption that  $w^\star > 0$ holds.
In \cref{s:NumExamples}, we will give examples of when the dimension of the graph realization is greater than 3 and when the assumption $w^\star > 0$ fails.  

A useful consequence of the strong duality used in the proof of  \cref{t:UnitDistGraphReal} is the following corollary.
\begin{cor} \label{c:Duality}
Suppose $X \in \mathbb R^{n \times d}$ and $w \in \mathbb R^m$ satisfy the following conditions: 
\begin{subequations}
\begin{align}
& X^t 1 = 0, \quad  \| X^t b_k \|^2 \leq \phi_k,  \ \forall k \in E  \\ 
& w \geq 0, \quad w^t \phi = 1, \quad d = \dim E_{\lambda_2(\Delta_w)} \\
\label{e:Dualityc}
&  \| X \|_F^2  = \frac{1}{\lambda_2(\Delta_w)}. 
\end{align}
\end{subequations}
Then $X$ and $w$ are respective solutions of \eqref{e:MaxGraphRealization2} and \eqref{e:EOP}. 
\end{cor}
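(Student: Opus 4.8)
The plan is to obtain \cref{c:Duality} purely from the weak duality inequality already recorded in \cref{t:well-posed}, combined with the fact that the two objective values in the hypotheses coincide; this is the standard ``matching primal and dual feasible objectives implies optimality'' argument. As a preliminary I would note that $\lambda_2(\Delta_w) > 0$: the number $\|X\|_F^2$ is finite and equals $1/\lambda_2(\Delta_w)$ by \eqref{e:Dualityc}, so the denominator cannot vanish and every reciprocal below is well defined.

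For the primal side, observe that the conditions $X^t 1 = 0$ and $\|X^t b_k\|^2 \le \phi_k$ for all $k \in E$ say exactly that $X \in \mathbb R^{n\times d}$ is feasible for \eqref{e:MaxGraphRealization2}. Let $X^\star$ be the maximizer furnished by \cref{t:well-posed}. Feasibility of $X$ gives $\|X\|_F^2 \le \|X^\star\|_F^2$, while the weak duality bound of \cref{t:well-posed}, applied to the feasible weight $w$ (we have $w \ge 0$ and $w^t\phi = 1$), gives $\|X^\star\|_F^2 \le 1/\lambda_2(\Delta_w)$. Chaining these with \eqref{e:Dualityc},
\[
\|X\|_F^2 \ \le\ \|X^\star\|_F^2 \ \le\ \frac{1}{\lambda_2(\Delta_w)} \ =\ \|X\|_F^2 ,
\]
so all inequalities are equalities; in particular $\|X\|_F^2 = \|X^\star\|_F^2$, i.e.\ $X$ attains the maximum in \eqref{e:MaxGraphRealization2} and is a solution.

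For the dual side, take an arbitrary $w'$ feasible for \eqref{e:EOP}, i.e.\ $w' \ge 0$ and $(w')^t\phi = 1$. Applying the weak duality bound of \cref{t:well-posed} once more yields $\|X^\star\|_F^2 \le 1/\lambda_2(\Delta_{w'})$ (when $\lambda_2(\Delta_{w'}) = 0$ the right-hand side is $+\infty$ and the inequality is vacuous). Since the previous step gives $\|X^\star\|_F^2 = \|X\|_F^2 = 1/\lambda_2(\Delta_w)$, this rearranges to $\lambda_2(\Delta_{w'}) \le \lambda_2(\Delta_w)$. As $w'$ was an arbitrary feasible weight and $w$ is itself feasible, $w$ maximizes $\lambda_2(\Delta_w)$ over the constraint set and hence solves \eqref{e:EOP}.

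I do not expect any genuine obstacle here: the argument is a short consequence of \cref{t:well-posed}, and all the substantive work (existence of $X^\star$, the weak duality inequality, and — in the companion direction — strong duality) has already been carried out. The only points needing a little care are the positivity of $\lambda_2(\Delta_w)$ noted above and keeping the dimension consistent: the hypothesis $d = \dim E_{\lambda_2(\Delta_w)}$ is what pins the realization $X \in \mathbb R^{n\times d}$ to the relevant instance of \eqref{e:MaxGraphRealization2}, so that ``$X$ solves \eqref{e:MaxGraphRealization2}'' refers to the same $d$ throughout.
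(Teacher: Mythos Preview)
Your argument is correct and is exactly the standard ``primal feasible plus dual feasible with matching objective values implies both optimal'' reasoning that the paper intends when it calls \cref{c:Duality} a consequence of the duality established around \cref{t:UnitDistGraphReal}. The paper does not spell out a separate proof; your write-up, which relies only on the weak duality inequality of \cref{t:well-posed}, is a clean and fully adequate justification.
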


Note that \eqref{e:Dualityc} is the saturation of the inequality \eqref{e:objFunBnd}. 
The following example shows how \cref{c:Duality} can be used to establish a  maximal graph realization for a cycle graph.  
\begin{exam} \label{ex:cycle}
A centered, regular $n$-gon with unit edge lengths satisfies 
$$
\|X\|_F^2 = \sum_{i \in [n]} \left( \frac{1}{2} \csc \left( \frac{\pi}{n}\right) \right)^2 = \frac{n}{4} \csc^2 \left( \frac{\pi}{n} \right). 
$$
The skeleton of the $n$-gon, an $n$-cycle, with edge weights $\frac{1}{n}$ satisfies 
$$
\lambda_2(\Delta_w ) =\frac{1}{n}  \lambda_2(\Delta_1) = \frac{4}{n}  \sin^2\left( \frac{\pi}{n} \right). 
$$
The corresponding eigenspace is $d=2$ dimensional.  
Since $ \| X \|_F^2  = \frac{1}{\lambda_2(\Delta_w)}$, by \cref{c:Duality}, we have that the maximal graph realization of the $n$-cycle is a centered, regular $n$-gon. 
\end{exam}

A partial converse to \cref{t:UnitDistGraphReal} can be established. 
We show that, under certain assumptions,  the coordinate vectors of a maximal graph realization are eigenvectors of a $w$-weighted graph Laplacian corresponding to the same eigenvalue for \emph{some} choice of graph weights $w \in \mathbb R^m_+$.
The main hurdle is that the dimension $d$ for \eqref{e:MaxGraphRealization2} is unknown. 
 To state our result, we require the following definition. 
\begin{defin} 
\label{d:regular}
For a realization of graph $G$, we say the coordinate  matrix $X \in \mathbb R^{n\times d}$ is \emph{regular} if there does not exist a weight $w\in \mathbb R^m \setminus \{0\}$ such that $\Delta_w X = 0$. 
\end{defin}
Note that in the definition of a regular coordinate matrix, we do not restrict the weights to be nonnegative. We also note that the equation $\Delta_w X = 0$ specifies $nd$ linear conditions on $m$ variables, so a regular coordinate matrix for graph with $n$ vertices and $m$ edges s in $\mathbb R^d$ must satisfy $m \leq nd$. 
\begin{thm} \label{t:Converse}
For any regular solution $X^\star \in \mathbb R^{n \times d}$ of  \eqref{e:MaxGraphRealization2}, there exists $w \geq 0$ such that 
$$
\Delta_w X^\star = X^\star,
$$
\ie,  the columns of $X^\star$ are eigenvectors of the $w$-weighted graph Laplacian corresponding to eigenvalue 1. Moreover, if for some $k \in E$, we have that $\| (X^\star)^t b_k \|^2 < \phi_k$, then $w_k = 0$. 
\end{thm}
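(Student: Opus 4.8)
The plan is to set up the optimization problem \eqref{e:MaxGraphRealization2} (after squaring, equivalently working with the Gram-type quantities $\|X^t b_k\|^2$) as a convex program and apply Karush--Kuhn--Tucker (KKT) optimality conditions at the regular maximizer $X^\star$, whose existence is guaranteed by \cref{t:well-posed}. First I would rescale: by homogeneity of \eqref{e:MaxGraphRealization2} in $\phi$ and $\|X\|_F^2$, I may assume without loss of generality that $\|X^\star\|_F^2 = 1$ (or carry the scaling through and absorb it into $w$ at the end); the eigenvalue-1 normalization in the statement $\Delta_w X^\star = X^\star$ reflects exactly this choice. Then I would form the Lagrangian
\begin{equation*}
\mathcal{L}(X,w,\mu) = \|X\|_F^2 - \sum_{k \in E} w_k \bigl( \|X^t b_k\|^2 - \phi_k \bigr) - \langle \mu, X^t 1 \rangle,
\end{equation*}
with multipliers $w \in \mathbb{R}^m_+$ for the inequality constraints and $\mu \in \mathbb{R}^d$ for the centering constraints. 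Stationarity in $X$ gives, using $\sum_k w_k b_k b_k^t = \Delta_w$,
\begin{equation*}
X^\star - \Delta_w X^\star - \tfrac{1}{2} 1 \mu^t = 0,
\end{equation*}
and left-multiplying by $1^t$ together with $1^t \Delta_w = 0$ and $1^t X^\star = 0$ forces $\mu = 0$, hence $\Delta_w X^\star = X^\star$. Complementary slackness gives $w_k = 0$ whenever $\|(X^\star)^t b_k\|^2 < \phi_k$, which is the final claim.

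The technical points requiring care are constraint qualification and the role of the regularity hypothesis. Since \eqref{e:MaxGraphRealization2} is a maximization of a convex quadratic, it is not a convex program in the usual sense, so I would instead argue via the following device: a maximizer of $\|X\|_F^2$ over the constraint set must in fact saturate the constraints in a manner making it a maximizer of a linear functional over the convex hull of the feasible region's ``boundary directions,'' or more directly, I would invoke \cref{t:UnitDistGraphReal} / \cref{c:Duality} to pass to the dual eigenvalue problem \eqref{e:EOP} and read off $w$ there. Concretely, \cref{t:well-posed} gives $\|X^\star\|_F^2 \le 1/\lambda_2(\Delta_w)$ for every feasible dual $w$; the existence of a dual optimum $w^\star$ with equality (strong duality, which underlies the proof of \cref{t:UnitDistGraphReal}) then yields $\Delta_{w^\star} X^\star = X^\star$ after the scaling normalization, provided the columns of $X^\star$ genuinely span the $\lambda_2$-eigenspace — and this is where regularity enters. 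The regularity of $X^\star$ (no nonzero $w$ with $\Delta_w X^\star = 0$) is precisely what rules out the degenerate situation in which the KKT/stationarity system for the multiplier $w$ is solvable only in a limiting sense or only by a $w$ that fails to be an actual certificate; it guarantees that the affine system $\Delta_w X^\star = X^\star$ in the unknown $w$ is consistent with a genuine (and, after checking signs via complementary slackness, nonnegative) solution rather than being obstructed.

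I expect the main obstacle to be establishing that the multiplier $w$ produced by stationarity is nonnegative and, simultaneously, that such a $w$ exists at all as opposed to the first-order conditions only being satisfiable ``at infinity.'' The nonnegativity should follow from complementary slackness combined with the sign structure of the problem (one can test against perturbations $X^\star + tY$ that shrink a single edge length, which can only decrease the objective at a maximizer, forcing the corresponding multiplier to be $\ge 0$); but turning the informal stationarity argument into a rigorous one for this non-convex-looking maximization — likely by the reduction to \eqref{e:EOP} via \cref{c:Duality} sketched above, using that $X^\star$ has $\|X^\star\|_F^2$ equal to the optimal value and hence meets the hypotheses of \cref{c:Duality} once we identify $d$ with the eigenspace dimension — is the crux. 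The regularity assumption is the lever that makes this identification work, and I would make sure to point out exactly where a non-regular $X^\star$ could fail (e.g., an $X^\star$ whose columns lie in a proper subspace annihilated by some $\Delta_w$, for which no eigenvalue interpretation is available).
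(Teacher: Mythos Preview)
Your first two paragraphs already contain the paper's proof: form the Lagrangian with multipliers $w\in\mathbb R^m_+$ and $\mu\in\mathbb R^d$, take the stationarity condition $X^\star-\Delta_w X^\star-\tfrac12\,1\mu^t=0$, left-multiply by $1^t$ to kill $\mu$, and read off complementary slackness. That is exactly what the paper does. The scaling discussion is unnecessary: the theorem does not assert $w^t\phi=1$, so the KKT multiplier $w$ is taken as produced, and $\Delta_w X^\star=X^\star$ holds on the nose.

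Where you go astray is in the last two paragraphs. First, KKT \emph{necessary} conditions hold at a local maximizer of a smooth (possibly non-convex) problem once a constraint qualification is verified; the fact that $\|X\|_F^2$ is convex rather than concave is irrelevant here, and the detour through \cref{t:UnitDistGraphReal}/\cref{c:Duality} is neither needed nor how the paper proceeds. Second, you misidentify the role of regularity. The paper uses regularity as the linear-independence constraint qualification (LICQ): the gradients of the equality constraints are $1_n e_\ell^t$, the gradients of the inequality constraints are $2b_kb_k^tX^\star$, these two families are automatically independent of each other since $b_k^t1_n=0$, and a nontrivial linear relation among the latter reads $\sum_k w_k b_k b_k^t X^\star=\Delta_w X^\star=0$ for some $w\neq0$---which is precisely what regularity forbids. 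That is the entire content of the hypothesis; it has nothing to do with $X^\star$ spanning an eigenspace or with solvability of $\Delta_w X^\star=X^\star$ ``at infinity.'' Third, you need not argue separately for $w\ge0$: for a maximization with $\le$-constraints, the KKT sign condition already gives nonnegative multipliers.
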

A proof of  \cref{t:Converse} will be given in \cref{s:Proofs}. 

\bigskip

We can also consider the problem of minimizing $\lambda_{max}(\Delta_w)$ over the edge weights, $w$.
\begin{thm} \label{t:UnitDistGraphReal'}
Suppose $w^\star \geq 0$ solves 
\begin{subequations}
\label{e:EOP'}
\begin{align}
\min_{w \in \mathbb R^m} \ &  \lambda_n( \Delta_w) \\ 
\textrm{s.t.} \ & \Delta_w u_i = \lambda_i   u _i, \qquad \forall i \in [n]\\ 
& w \geq 0, \quad w^t \phi = 1. 
\end{align}
\end{subequations}
and the minimum eigenvalue $\lambda^\star$ has a $d$-dimensional eigenspace $E_{\lambda^\star}$. Then there exist orthogonal eigenvectors $x_\ell \in E_{\lambda^\star}$, $\ell \in [d]$ such that the $d$-dimensional graph realization $x\colon V \to \mathbb  R^d$ with coordinate matrix $X = [x_1 \mid \cdots \mid x_d] \in \mathbb R^{n \times d} $  
is a minimal graph realization solving 
\begin{subequations}
\label{e:MaxGraphRealization2'}
\begin{align}
\min_{X \in \mathbb R^{n \times d}} \ & \| X \|_F^2 \\
\label{e:2b'}
\textrm{s.t.} \ & \| X^t b_k \|^2 \geq \phi_k,  \qquad \forall k \in E \\ 
& X^t 1 = 0.
\end{align}
\end{subequations}
Moreover, if  $w^\star > 0$, then $X \in \mathbb R^{n \times d}$ satisfies 
$\| X^t b_k \|^2 = \phi_k$, $\forall k \in E$.
\end{thm}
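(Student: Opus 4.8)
The plan is to follow the proof of \cref{t:UnitDistGraphReal} almost verbatim, replacing $\lambda_2$ by $\lambda_n$ and reversing the edge‑length inequalities, so that \eqref{e:EOP'} and \eqref{e:MaxGraphRealization2'} become a convex primal–dual pair with the roles of ``upper'' and ``lower'' bound interchanged. The first step is a weak‑duality estimate: for any $X$ feasible for \eqref{e:MaxGraphRealization2'} and any $w \ge 0$ with $w^t\phi = 1$,
\begin{equation*}
\lambda_n(\Delta_w)\,\|X\|_F^2 \;\ge\; \mathrm{tr}\big(X^t\Delta_w X\big) \;=\; \sum_{k \in E} w_k\,\|X^t b_k\|^2 \;\ge\; \sum_{k \in E} w_k\phi_k \;=\; 1 ,
\end{equation*}
where the first inequality applies the Rayleigh bound $x^t\Delta_w x \le \lambda_n(\Delta_w)\|x\|^2$ to each column of $X$, the identity uses $\Delta_w = B^t\textrm{diag}(w)B$, and the last inequality uses $w \ge 0$ together with the \emph{reversed} constraint \eqref{e:2b'}, $\|X^t b_k\|^2 \ge \phi_k$. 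Since $w^\star$ is feasible for \eqref{e:EOP'} it is nonzero, which forces $\lambda_n(\Delta_{w^\star}) > 0$ (a nonzero weight yields a nonzero Laplacian); dividing gives $\|X\|_F^2 \ge 1/\lambda_n(\Delta_w)$, the exact analogue of \eqref{e:objFunBnd}, so the optimal value of \eqref{e:MaxGraphRealization2'} is at least $1/\lambda^\star$.

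Next I would construct a feasible $X$ attaining $1/\lambda^\star$, which by the estimate above is then automatically a minimizer. The tool is the first‑order optimality condition for the convex program \eqref{e:EOP'}: writing $\lambda_n(\Delta_w) = \lambda_{\max}(B^t\textrm{diag}(w)B)$ as a convex, piecewise‑linear‑in‑$w$ function, its subdifferential at $w^\star$ is $\{g : g_k = b_k^t VZV^t b_k,\ Z \in \mathbb S_+^d,\ \mathrm{tr}\,Z = 1\}$, where the columns of $V \in \mathbb R^{n\times d}$ form an orthonormal basis of $E_{\lambda^\star}$. Because the remaining constraints $w \ge 0$, $w^t\phi = 1$ are affine, KKT holds without a further constraint qualification: there are $g$ in this subdifferential, a scalar $\nu$, and $\eta \ge 0$ with $\eta_k w^\star_k = 0$ such that $g = \nu\phi + \eta$, hence $g_k \ge \nu\phi_k$ with equality whenever $w^\star_k > 0$. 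Pairing $g = \nu\phi + \eta$ against $w^\star$ and using $\sum_k w^\star_k\, b_k^t VZV^t b_k = \mathrm{tr}(V^t\Delta_{w^\star}V\,Z) = \lambda^\star\,\mathrm{tr}\,Z = \lambda^\star$ identifies $\nu = \lambda^\star > 0$. Now set $X = (\lambda^\star)^{-1/2}\,V Z^{1/2}$; diagonalizing $Z = Q\,\textrm{diag}(\zeta)\,Q^t$, one may take the columns of $X$ to be $x_\ell = (\zeta_\ell/\lambda^\star)^{1/2}\,V q_\ell$, where $q_\ell$ is the $\ell$‑th column of $Q$, an orthogonal family in $E_{\lambda^\star}$ (padded with zero columns if $\mathrm{rank}\,Z < d$). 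Then $XX^t = (\lambda^\star)^{-1}VZV^t$, so $\|X^t b_k\|^2 = g_k/\lambda^\star \ge \phi_k$ and $\|X\|_F^2 = \mathrm{tr}(Z)/\lambda^\star = 1/\lambda^\star$; and since the columns of $X$ lie in $E_{\lambda^\star}$ with $\lambda^\star > 0$, they are orthogonal to $\ker\Delta_{w^\star} = \textrm{span}(1)$, so $X^t 1 = 0$.

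These facts say precisely that $X$ is feasible for \eqref{e:MaxGraphRealization2'} and meets the lower bound $1/\lambda^\star$, so $X$ is a minimal graph realization and $w^\star$ certifies it. For the final assertion, if $w^\star > 0$ then complementary slackness $\eta_k w^\star_k = 0$ gives $\eta = 0$, whence $g = \lambda^\star\phi$ and $\|X^t b_k\|^2 = \phi_k$ for every $k \in E$. I expect the only real work — exactly as in \cref{t:UnitDistGraphReal} — to be the non‑smooth optimality analysis: justifying the stated form of the subdifferential of $\lambda_{\max}$ at the multiple eigenvalue $\lambda^\star$ and the bookkeeping that turns the certificate $Z$ into an orthogonal family of eigenvectors of the correct total variance; the Rayleigh/trace manipulations and the verification that $\lambda^\star \neq 0$ are routine.
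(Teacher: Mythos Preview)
Your proposal is correct and follows essentially the same convex–duality approach that the paper indicates (it omits the proof of \cref{t:UnitDistGraphReal'}, referring to that of \cref{t:UnitDistGraphReal}). The only cosmetic difference is packaging: the paper rewrites the eigenvalue problem as an SDP ($\Delta_w \preceq tI$ for the $\lambda_n$ case) and reads off the dual in terms of a matrix variable $Y\in\mathbb S^n_+$, whereas you invoke the known subdifferential of $\lambda_{\max}$ at a multiple eigenvalue, which encodes the same certificate $Y=VZV^t$ directly; the identification $X=(\lambda^\star)^{-1/2}VZ^{1/2}$ and the complementary–slackness argument for the $w^\star>0$ case coincide with the paper's. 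One tiny overstatement: you write $\ker\Delta_{w^\star}=\operatorname{span}(1)$, which need not hold if the positively–weighted subgraph is disconnected, but your conclusion $X^t1=0$ only uses $1\in\ker\Delta_{w^\star}$ and $\lambda^\star>0$, so the argument stands.
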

The proof of  \cref{t:UnitDistGraphReal'} is similar to the proof of \cref{t:UnitDistGraphReal} and will be omitted. 
In \cref{s:NumExamples}, we give several examples to illustrate  \cref{t:UnitDistGraphReal'}. 
Analogous to \cref{c:Duality}, the following corollary is a consequence of the strong duality used in the proof of   \cref{t:UnitDistGraphReal'}. 
\begin{cor} \label{c:Duality'}
Suppose $X \in \mathbb R^{n \times d}$ and $w \in \mathbb R^m$ satisfy the following conditions: 
\begin{subequations}
\begin{align}
& X^t 1 = 0, \quad  \| X^t b_k \|^2 \geq \phi_k,  \ \forall k \in E  \\ 
& w \geq 0, \quad w^t \phi = 1, \quad d = \dim E_{\lambda_2(\Delta_w)} \\
&  \| X \|_F^2  = \frac{1}{\lambda_n(\Delta_w)}. 
\end{align}
\end{subequations}
Then $X$ and $w$ are respective solutions of \eqref{e:MaxGraphRealization2'} and \eqref{e:EOP'}. 
\end{cor}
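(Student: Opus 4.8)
The plan is to obtain \cref{c:Duality'} in exactly the way \cref{c:Duality} follows from the duality between \eqref{e:MaxGraphRealization2} and \eqref{e:EOP}: first isolate the weak-duality inequality linking \eqref{e:MaxGraphRealization2'} and \eqref{e:EOP'} — the analogue of \eqref{e:objFunBnd} — and then use the hypothesized saturation $\|X\|_F^2 = 1/\lambda_n(\Delta_w)$ to pinch the two optimal values together, which forces the given pair to be optimal for its respective problem.

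First I would establish the relevant weak-duality bound: whenever $X\in\mathbb R^{n\times d}$ satisfies $\|X^t b_k\|^2\ge\phi_k$ for all $k\in E$ and $w\in\mathbb R^m$ satisfies $w\ge0$, $w^t\phi=1$, then $\|X\|_F^2\ge 1/\lambda_n(\Delta_w)$. As in the proof of \cref{t:well-posed}, this is a two-sided estimate of $\textrm{tr}(X^t\Delta_w X)$. Expanding $\Delta_w=B^t\textrm{diag}(w)B$ gives $\textrm{tr}(X^t\Delta_w X)=\sum_{k\in E}w_k\|X^t b_k\|^2$; since $w_k\ge0$ and the edge constraints in \eqref{e:2b'} point the opposite way to those in \eqref{e:MaxGraphRealization2}, this is $\ge\sum_{k\in E}w_k\phi_k=w^t\phi=1$. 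On the other hand, applying the Rayleigh bound $v^t\Delta_w v\le\lambda_n(\Delta_w)\|v\|^2$ to each column of $X$ gives $\textrm{tr}(X^t\Delta_w X)\le\lambda_n(\Delta_w)\|X\|_F^2$, and combining the two inequalities yields the claim. Note that, unlike in \eqref{e:objFunBnd}, the centering constraint $X^t1=0$ is not needed here, since the relevant Rayleigh extremum is $\lambda_n$ rather than $\lambda_2$; it is retained only as part of the feasible set. Also, $w^t\phi=1$ forces $w\ne0$, hence $\Delta_w\ne0$ and $\lambda_n(\Delta_w)>0$, so the division is legitimate.

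Then I would run the standard sandwich. Let $p^\star$ and $q^\star$ denote the optimal values of \eqref{e:MaxGraphRealization2'} and \eqref{e:EOP'}. Taking the infimum over feasible $X$ and the supremum over feasible $w$ in the weak-duality bound gives $p^\star\ge 1/q^\star$. Now let $(X,w)$ be a pair satisfying the hypotheses of the corollary; it is feasible for the two problems, and $\lambda_n(\Delta_w)\ge q^\star$, so
$$
\frac{1}{\lambda_n(\Delta_w)} \;=\; \|X\|_F^2 \;\ge\; p^\star \;\ge\; \frac{1}{q^\star} \;\ge\; \frac{1}{\lambda_n(\Delta_w)},
$$
and every inequality is an equality. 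Hence $\|X\|_F^2=p^\star$, so $X$ solves \eqref{e:MaxGraphRealization2'}, and $\lambda_n(\Delta_w)=q^\star$, so $w$ solves \eqref{e:EOP'}. The dimension hypothesis (matching $d$ with the multiplicity of the extremal eigenvalue of $\Delta_w$) is inherited from the formulation in \cref{t:UnitDistGraphReal'} and only makes $X$ and $w$ dimensionally compatible; it is not otherwise used. The one place to be careful — and the only substantive difference from \cref{c:Duality} — is keeping the inequality directions consistent: both \eqref{e:MaxGraphRealization2'} and \eqref{e:EOP'} are minimizations, and the reversed edge constraint $\|X^t b_k\|^2\ge\phi_k$ must be paired with $w\ge0$ in the correct direction when lower-bounding $\textrm{tr}(X^t\Delta_w X)$; there is no genuine obstacle beyond that.
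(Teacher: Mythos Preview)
Your proposal is correct and follows essentially the same approach the paper indicates: the paper does not spell out a proof of \cref{c:Duality'} but states it is ``a consequence of the strong duality used in the proof of \cref{t:UnitDistGraphReal'},'' and your weak-duality computation $\textrm{tr}(X^t\Delta_w X)\ge 1$ combined with the Rayleigh upper bound and the saturation hypothesis is exactly that consequence, mirroring how \eqref{e:objFunBnd} is obtained in the proof of \cref{t:well-posed}. Your observation that only weak duality (not the full Slater/strong-duality machinery) is needed once the saturation $\|X\|_F^2=1/\lambda_n(\Delta_w)$ is assumed, and that the centering and dimension hypotheses play no role in the argument itself, is accurate and worth noting.
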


The following example shows how \cref{c:Duality'} can be used to establish a  minimal graph realization for a semi-regular bipartite graph. 
\begin{exam} \label{ex:2pt}
Let $G$ be a $(d_+,d_-)$-semi-regular bipartite graph with  bipartition $V = V_+ \cup V_-$. 
Recall that a bipartite graph with bipartition $V = V_+ \cup V_-$ is $(d_+,d_-)$-\emph{semi-regular} if all vertices in $V_+$ have degree $d_+$ and all vertices in $V_-$ have degree $d_-$.  
We claim the the minimal graph realization solving \eqref{e:MaxGraphRealization2'} with $\phi = 1$ is given by 
\begin{equation}
\label{e:MinGraphRealX}
x(i) = 
\begin{cases} 
c +\frac{1}{2}, & i \in V_+ \\ 
c -\frac{1}{2}, & i \in V_-
\end{cases}, 
\end{equation}
where $c = - \frac{|V_+| -|V_-|}{2 ( |V_+| + |V_-|)}$. 
First observe that the graph realization in \eqref{e:MinGraphRealX} is centered:
$$
\sum_{i \in [n]} x(i) = |V_+| \left(c+\frac{1}{2} \right) + |V_-| \left( c - \frac{1}{2} \right) = 0. 
$$
 We then compute
 $$
 \|X\|_F^2 = \sum_{i \in [n]} x(i)^2 = \frac{|V_+| |V_-| }{ |V_+| + |V_-| }.
$$ 

For a $(d_+,d_-)$ semi-regular graph and 
 $w_k = \frac{1}{m}$, $\forall k \in [m]$,  we have
 $\lambda_n(\Delta_w) =  \frac{1}{m} \lambda_n (\Delta_1)   = \frac{1}{m} \left( d_+ + d_- \right)$; 
 see \cite{doi:10.1080/03081088508817681} or \cite[Thm. 2.2(b)]{Mohar91thelaplacian}. Using that $d_+|V_+| = d_- |V_-| = m$, we obtain 
 $$  
 \lambda_n(\Delta_w) =  \frac{|V_+| + |V_-|}{ |V_+| |V_-|} = \frac{1}{ \| X \|_F^2 }.
 $$ 
 The corresponding eigenspace is $d=1$ dimensional. 
By  \cref{c:Duality'}, the two point realization in  \eqref{e:MinGraphRealX} is a minimal graph realization. 
\end{exam}

\subsection{Related work} 
The second eigenvalue of the graph Laplacian is referred to as the \emph{algebraic connectivity} and is related to the other notions of graph connectivity \cite{Fiedler_1973}. The problem \eqref{e:EOP} of maximizing the algebraic connectivity as a function of the edge weights  has been considered in a variety of previous papers; see, for example,  \cite{Ghosh_2006,Ghosh_2006b,JMLR:v15:osting14a}. However, the optimality conditions have not previously been interpreted in terms of edge-length constrained spectral realizations.   In \cite{Osting_2017}, the authors consider maximizing spectral quantities of a weighted graph Laplacian, where the edge weights depend on the distance between adjacent vertices in a graph realization. Here, we take the edge weights to vary independently of the coordinates of the graph realization. 

There are deep connections between the symmetry properties of a graph (automorphism group) and the eigendecomposition of the graph Laplacian \cite{Terwilliger_1982,Van_Dam_2016}. 

Our primary motivation is from recent results in spectral geometry.  In particular, A. Frasier and R. Schoen recently showed that the solution of an extremal Steklov eigenvalue problem on a closed surface with boundary can be used to generate a free boundary minimal surface (FBMS) \cite{Fraser_2015}. 
Using this connection, the author together with \'E. Oudet and C.Y. Kao computed many FBMS \cite{Oudet_2021}. 
 Analogously, R. Petrides showed that the solution of an extremal Laplace-Beltrami eigenvalue problem generates a minimal isometric immersion into some $d$-sphere by first eigenfunctions \cite{Petrides_2014}. The problem considered in this paper can be considered as a graph version of these results.

\subsection*{Outline}
In \cref{s:Proofs}, we prove \cref{t:well-posed}, \cref{t:UnitDistGraphReal}, and  \cref{t:Converse}. 
We give several numerical examples in \cref{s:NumExamples}. 
We conclude in \cref{s:Disc} with a brief discussion.

\section{Proofs of Theorems~\ref{t:well-posed}, \ref{t:UnitDistGraphReal}, and \ref{t:Converse}.}
\label{s:Proofs}

\begin{proof}[Proof of Theorem~\ref{t:well-posed}.]
For $X \in \mathbb R^{n \times d}$ satisfying the constraints in \eqref{e:MaxGraphRealization2}, and $w \in \mathbb R^m_+$ satisfying $w^t \phi = 1$, we compute
$$
1 = \sum_{k \in [m]} w_k \phi_k =  \sum_{k\in[m]} w_k b_k^t X X^t b_k = \textrm{tr}\left( \left( \sum_{k \in [m]} w_k b_k b_k^t \right) X X^t \right) = \textrm{tr} \left( X^t \Delta_w X\right) \geq \lambda_2(\Delta_w) \| X\|_F^2 
$$
which gives \eqref{e:objFunBnd}. The result then follows from the Bolzano–Weierstrass theorem.
\end{proof}

\begin{proof}[Proof of Theorem~\ref{t:UnitDistGraphReal}.]
It is equivalent to write  \eqref{e:EOP} as the convex semidefinite program (SDP)
\begin{subequations}
\label{e:SDP}
\begin{align}
\max_{t,w}  \  & t \\
s.t \ & \Delta_w \succeq t J  \\
& w \geq 0, \quad \phi^t w = 1. 
\end{align}
\end{subequations}
Here, $J  \in \mathbb S^n_+$ is the rank $n-1$ project matrix given by $J= I - 11^t/n$. 

Our proof relies on the Lagrange dual formulation of \eqref{e:SDP}. 
For the dual variables $Y \in \mathbb S^n_+ $ and $\mu \in \mathbb R$, we consider the \emph{Lagrangian}
\begin{align*}
L(w,t; Y, \mu) &:= t - \langle t J - \Delta_w, Y \rangle_F - \mu ( \phi^t w  - 1) \\
& = \mu + t \left( 1 - \langle J, Y \rangle_F \right) + \sum_{k \in [m]} w_k ( b_k^t Y b_k - \mu \phi_k) .
\end{align*}
For fixed $Y \in \mathbb S^n_+ $ and $\mu \in \mathbb R$, the \emph{dual function} is defined 
\begin{align*}
g(Y,\mu) &:= \max_{w\geq 0, t} \ L(w, t; Y, \mu) \\
&= \begin{cases}
\mu, &  \langle J , Y \rangle_F  = 1, \ b_k^t Y b_k \leq \mu \phi_k \\ 
\infty, & \textrm{otherwise}
\end{cases}.
\end{align*}
The \emph{dual problem} can then be written 
\begin{subequations}
\label{e:DualProb}
\begin{align}
\min_{Y\in \mathbb S^n_+, \mu } \ & \mu \\ 
\textrm{s.t.} \ & \langle J, Y \rangle_F =1 \\
& b_k ^t Y b_k \leq \mu \phi_k, \quad \forall k \in [m]. 
\end{align}
Note that replacing $Y$ by $Y + \alpha 1 1^t$ for any $\alpha > 0$ in \eqref{e:DualProb}  does not change the objective value and still satisfies all constraints. Thus, we may augment \eqref{e:DualProb} with the additional constraint, 
\begin{equation}
1^t Y 1 = \langle 1 1^t,  Y \rangle_F = 0
\end{equation}
\end{subequations}

The semi-definite optimization problem in \eqref{e:SDP} is a convex. Furthermore, the following argument shows that Slater's condition (see, \eg, \cite[Section 5.2.3]{Boyd_2004}) is satisfied. 
Define $\hat w \in \mathbb R^m_+$ by 
$\hat w_k = \frac{1}{\phi^t 1}$, $k\in [m]$ 
so that  $\hat w^t \phi = 1$. 
Define $\hat t = \frac{1}{2} \frac{1}{\phi^t 1} \lambda_2(\Delta_1)$ and note that the connectivity assumption on $G$ guarantees that $\hat t > 0$.  
We then compute 
$$
\Delta_{\hat w} =  \frac{1}{\phi^t 1} \Delta_1 \succeq \frac{1}{\phi^t 1} \lambda_2(\Delta_1) J \succ \hat tJ,
$$
which shows that $\hat w, \hat t$ satisfy Slater's condition for the constraints in \eqref{e:SDP}. 

The primal and dual convex optimization problems  (\eqref{e:SDP} and \eqref{e:DualProb}) then satisfy strong duality, \ie, denoting their optimal values  with the superscript $\star$, we have that $t^\star = \mu^\star$. 
It is also necessary and sufficient that the Karush-Kuhn-Tucker (KKT) conditions be satisfied by the optimal solution. The KKT conditions can be stated:  there exist 
primal variables 
$w^\star \in  \mathbb R^m$, 
$t^\star \in \mathbb R$
and dual variables 
$Y^\star \in \mathbb S^n_+$, 
$\mu^\star \in \mathbb R$, 
satisfying 
\begin{subequations}
\label{e:KKT}
\begin{align}
\label{e:KKTa}
& \Delta_{w^\star} \geq t^\star J,  \ w^\star \geq 0,  \ \phi^t w^\star = 1 \\
\label{e:KKTb}
& Y^\star \in \mathbb S^n_+, \ \langle J, Y^\star \rangle_F = 1, \ 1^t Y 1 = 0,  \ b_k ^t Y^\star b_k \leq \mu^\star \phi_k, \ \forall k \in [m] \\ 
\label{e:KKTc}
& w_k \left( b_k ^t Y b_k - \mu \phi_k \right) = 0, \ \forall k \in [m] . 
\end{align}
\end{subequations}
Here, we've grouped the conditions as primal feasible \eqref{e:KKTa}, dual feasible \eqref{e:KKTb}, and complimentary slackness \eqref{e:KKTc}. 

For a solution to the KKT conditions, we have 
\begin{align}
t^\star 
= t^\star \langle J, Y^\star \rangle_F 
\leq \langle  \Delta_{w^\star}, Y^\star \rangle_F
= \sum_{k\in [m]} w^\star_k b_k^t Y^\star b_k = \mu^\star \sum_k w^\star_k \phi_k = \mu^\star. 
\end{align}
By strong duality, $t^\star = \mu^\star  = \lambda_2^\star$, which implies that 
\begin{equation}
\langle   \Delta_{w^\star}, Y^\star \rangle_F = \lambda_2^\star.
\end{equation}
Noting that $1$ is an eigenvector of $Y^\star$ with eigenvalue 0, 
we can write the eigenvalue decomposition $Y^\star = \sum_{i \in [n-1]} \beta_i v_i \otimes v_i$ with $\|v_i\| = 1$ and  $v_i \perp 1$. 
We have
$$
\lambda_2^\star 
=  \langle \Delta_{w^\star}, Y^\star  \rangle_F 
=  \sum_{i \in [n-1]} \beta_i  \langle v_i, \Delta_{w^\star} v_i \rangle 
\geq  \sum_{i \in [n-1]} \beta_i \lambda_2^\star \| v_i \|^2 = \lambda_2^\star,
$$
which implies that either $\beta_i = 0 $ or $v_i \in E_{\lambda^\star}$. Let $d = \dim  E_{\lambda_2^\star}$.  Relabelling if necessary, we have that 
$$
Y^\star = \sum_{i \in [d]}  \beta_i v_i \otimes v_i,  
\qquad \beta_i \geq 0, \  v_i \in E_{\lambda^\star}, \ \forall i \in [d].
$$
This implies that $Y^\star$ has rank at most $d<n$. 
Making the substitution 
$$
Y = \mu X X^t, \qquad X \in \mathbb R^{n \times d}
$$ 
in \eqref{e:DualProb},  it follows that \eqref{e:DualProb} is equivalent to the optimization problem 
\begin{subequations}
\label{e:D3}
\begin{align}
\min_{\mu \in \mathbb R, X \in \mathbb R^{n \times d}}  \ & \mu \\ 
\textrm{s.t.} \ & \mu\| X\|_F^2 = 1 \\
& \| X^t b_k \|^2 \leq \phi_k, \quad \forall k \in [m] \\ 
& X^t 1 = 0.
\end{align}
\end{subequations}
Here,  
$$
X = \frac{1}{\sqrt{\mu^\star} } \left[ \sqrt{\beta_1} v_1 | \cdots |   \sqrt{\beta_d} v_d \right]  \in \mathbb R^{n \times d}
$$ 
is a  coordinate matrix for a centered realization of the graph.
Eliminating $\mu$ by setting $\mu = \frac{1}{\|X\|_F}$, this can be equivalently rewritten as \eqref{e:MaxGraphRealization2}. 
 If $w^\star > 0$, then by the 
complimentary slackness condition \eqref{e:KKTc}, it is a  unit-distance realization. 
In this case, the solution to \eqref{e:MaxGraphRealization2} is also a solution to \eqref{e:MaxGraphRealization}, since the latter constraint set is smaller. 
\end{proof}

 \begin{proof}[Proof of Theorem~\ref{t:Converse}.]
We first observe that the condition that the realization be centered is necessary. For any $w>0$, taking the inner product of both sides of $\Delta_w u_\ell = u_\ell$ with the ones vector
 gives that 
$$
\langle 1, u_\ell \rangle 
= \langle 1, \Delta_w u_\ell \rangle 
= \langle \Delta_w 1,  u_\ell \rangle 
= 0, 
\qquad \forall \ell \in [d]. 
$$

We will consider the Karush-Kuhn-Tucker (KKT)  conditions for \eqref{e:MaxGraphRealization2}, which are necessarily satisfied for a stationary point satisfying constraint qualifications. 
We use the constraint qualification that the gradients with respect to the active inequality constraints and equality constraints are linearly independent. Writing $\|X^t b_k \|^2 = \textrm{tr} (b_k b_k^t X X^t ) $, we compute
\begin{equation}
\label{e:grad1}
\nabla_X \|X^t b_k \|^2 = 2 b_k b_k^t X, \qquad \forall k \in [m]. 
\end{equation}
 Writing $X = [x_1 \mid \cdots \mid x_d ] \in \mathbb R^{n \times d}$ and  $X^t 1_n = \left[ x_1^t 1_n \mid \cdots \mid x_d^t 1_n \right]^t$, we compute
\begin{equation}
\label{e:grad2}
\nabla_X x_\ell^t 1_n = 1_n e_\ell^t, \qquad \forall \ell \in [d].
\end{equation}
Note that the gradients in \eqref{e:grad1} and \eqref{e:grad2} are linearly independent since $b_k^t 1_n = 0$. The condition that the gradients in \eqref{e:grad1} are linearly independent can be stated: there does not exist a $w \in \mathbb R^m \setminus \{0\}$ such that \
$$
\sum_{k \in [m]} w_k b_k b_k^t X = 0_{n\times d}.  
$$ 
Since $\Delta_w = \sum_{k \in [m]} w_k b_k b_k^t$, this is precisely the regularity of $X$ (\cref{d:regular}). 

For dual variables $z \in \mathbb R^d$ and $w \in \mathbb R^m_+$, we introduce the \emph{Lagrangian}, 
\begin{subequations}
\begin{align}
L(X; z, w) & := \| X \|_F^2 + z^t X^t 1 - \sum_{k \in {[m]}} w_k \left( \| X^t b_k \|^2 - \phi_k \right) \\
& = \textrm{tr}( X^tX) +  \textrm{tr}(1 z^t X^t) - \left(  \textrm{tr}(\Delta_w X X^t) - w^t \phi \right).
\end{align}
\end{subequations}
The KKT conditions are that there exist $z \in \mathbb R^d$ and $w \in \mathbb R^m_+$ such that 
\begin{subequations}
\begin{align}
\label{e:Ka}
& \nabla_X L(X;z,w) = 2 X + 1 z^t - 2 \Delta_w X = 0\\
\label{e:Kb}
& w_k \left( \| X^t b_k \|^2 - \phi_k \right) = 0. 
\end{align}
\end{subequations}
Multiplying both sides of \eqref{e:Ka} on the left by $1 \in \mathbb R^n$ gives $z = 0$, yielding $\Delta_w X = X$. The complementary slackness condition \eqref{e:Kb} gives the final statement of the theorem. 
\end{proof}

\section{Numerical Examples} 
\label{s:NumExamples}
Since \eqref{e:SDP} is a convex optimization problem, it can easily be solved using \verb+CVXPY+ \cite{diamond2016cvxpy}. 
To represent the graphs, we use the \verb+networkx+ library \cite{networkx}. 
Our implementation is available at the author's github page \cite{Osting-github}.
We report here the result of several small-scale numerical experiments to illustrate our ideas. 
All experiments were performed in a small number of seconds on a laptop with a 1.6GHz Intel Core i5 processor and 8GB of RAM. 

\subsection{Some two- and three-dimensional graph realizations}
\label{s:NumExamples:A}
By \cref{ex:cycle}, a maximal graph realization for a $n$-cycle graph is the centered regular $n$-gon. 
We computationally verified this for several values of $n$.   

We considered several other graphs, including the 
a square lattice graph, 
a triangular lattice graph, 
a hexagonal lattice graph, 
a tetrahedral graph, 
a cube graph, 
a dodecahedral graph, 
an icosahedral graph, 
an octahedral graph, 
a buckyball graph, 
a truncated tetrahedron graph, 
a truncated cube graph, and 
a circular ladder graph. 
For each of these graphs, we solved the eigenvalue optimization problem \eqref{e:EOP} with $\phi = 1$, which gave a solution with $w^\star > 0$. 
By \cref{t:UnitDistGraphReal}, there exist orthogonal eigenvectors so that the spectral graph realization is a solution to 
\eqref{e:MaxGraphRealization}. 
The resulting unit-distance graph realizations are plotted in \cref{f:graphZoo}. 
 The edge colors represent the edge weights, $w^\star$. 
 Interestingly, while the square and triangular lattices have the regular embedding, the hexagonal one does not. The Platonic polyhedron graphs all have edges with equal weight, as expected. In the buckyball embedding, the edges adjacent to only hexagons and edges adjacent to both hexagons and pentagons have different weights. For the truncated tetrahedron and cube graphs, the edges resulting from truncations have different weight. For a circular ladder graph, we again observe different weights on the two types of edges.

\subsection{Petersen graph} 
\begin{wrapfigure}{r}{0.2\textwidth}
\vspace{-.5cm}
\centering
\includegraphics[width=0.18\textwidth]{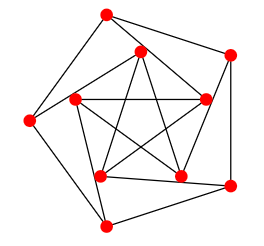}
\end{wrapfigure}
The \emph{Petersen graph} has  10 vertices and 15 edges and is known to have the two-dimensional unit-distance realization drawn on the right. 
The solution to  \eqref{e:EOP} with $\phi = 1$ yields a solution with $w^\star > 0$ and all equal optimal weights. 
But the multiplicity of the optimal eigenvalue is 5, giving a spectral realization in $d=5$ dimensions. 
The total variance for the five-dimensional maximal graph realization is  
$\| X \|_F = 7.5$ 
and the total variance for the two-dimensional realization is 
$\| X \|_F^2 = 5$.

\subsection{House graph and house x-graph} 
\label{s:HouseGraph}

Here we consider the 
house graph and the house x-graph. 

The \emph{house graph} has $n=5$ vertices and  $m=6$ edges and the usual two-dimensional realization is given in \cref{f:houseGraphs}(left). For the house graph, the solution to  \eqref{e:EOP} with $\phi = 1$ yields a solution with $w^\star > 0$. The maximal spectral realization is the usual drawing of the graph as in \cref{f:houseGraphs}(left). 

\begin{wrapfigure}{r}{0.2\textwidth}
\begin{center}
\includegraphics[width=0.11\textwidth]{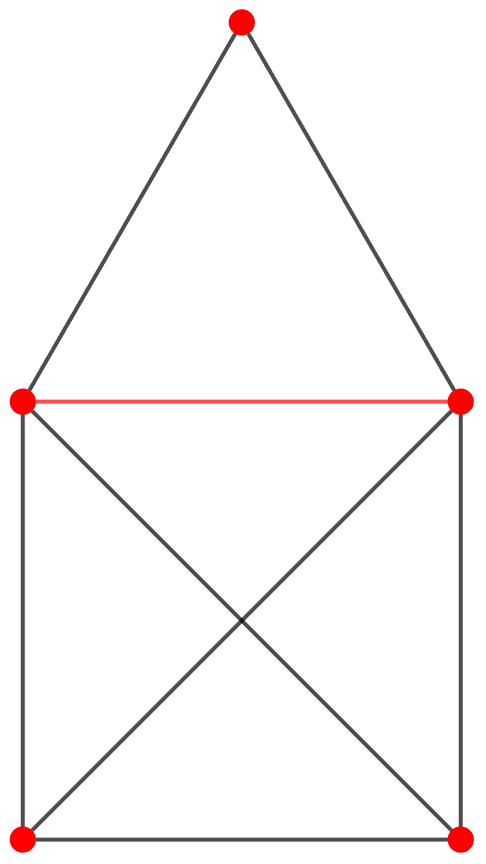}
\end{center}
\vspace{-.2cm}
\end{wrapfigure}
The \emph{house x-graph} has $n=5$ vertices and $m=8$ edges and the usual two-dimensional realization of the house-x graph is given on the right. It is the house graph with two additional edges incident diagonally opposite vertices of the square. 
For the house x-graph, the solution to  \eqref{e:EOP} with $\phi = 1$ yields a solution where 
one of the edge weights $w^\star_k$ is zero. The edge with zero weight is at the top of the square, drawn in red to the right. The maximal spectral realization is given in \cref{f:houseGraphs}(right). 
For the edge $k$ with zero weight, we have $\| X^t b_k\| = 0$, so that the two vertices at the top left and top right of the square are mapped to the same position.

\begin{figure}[t!]
\begin{center}
\includegraphics[width=0.35\textwidth]{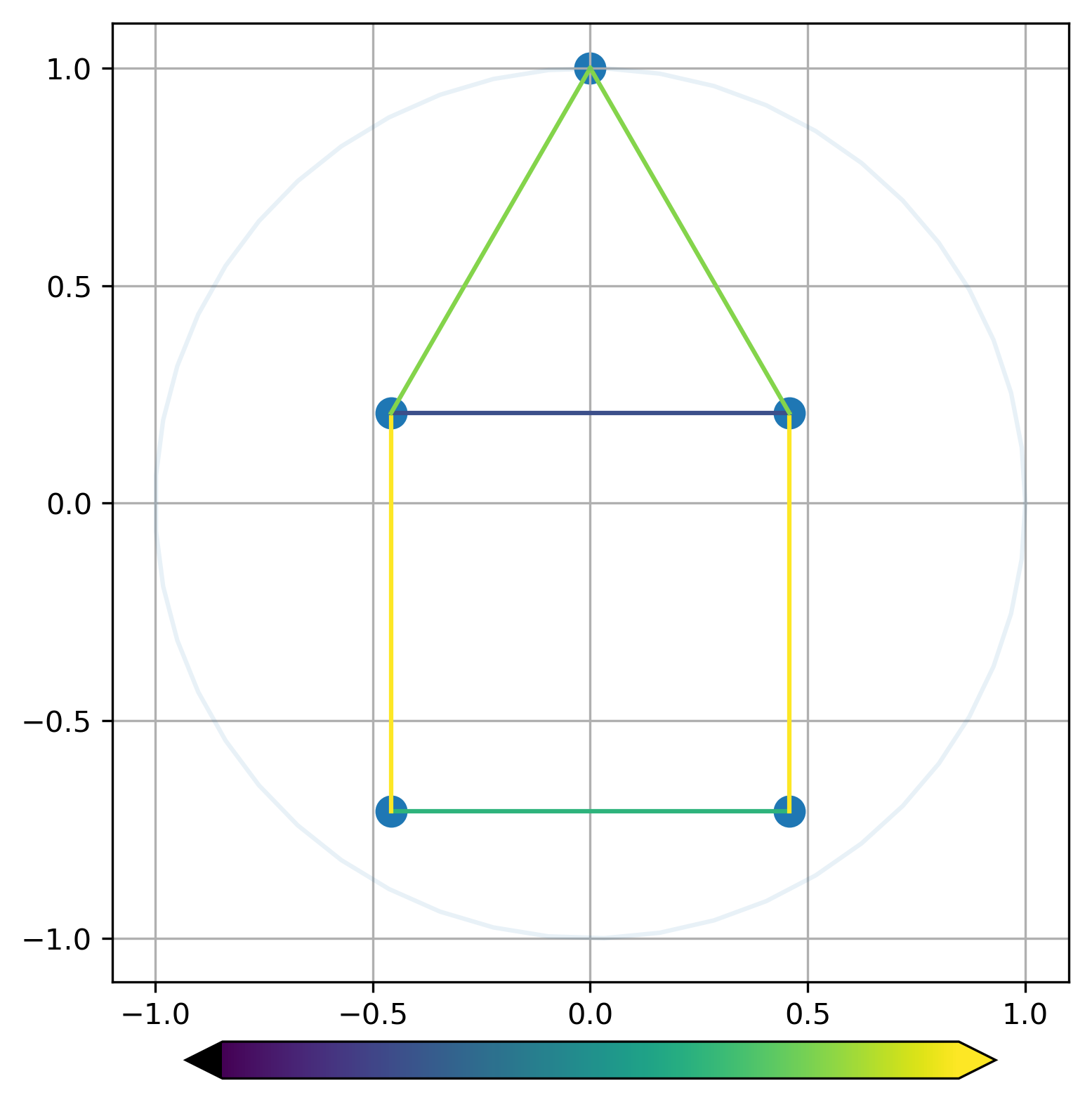}
\includegraphics[width=0.35\textwidth]{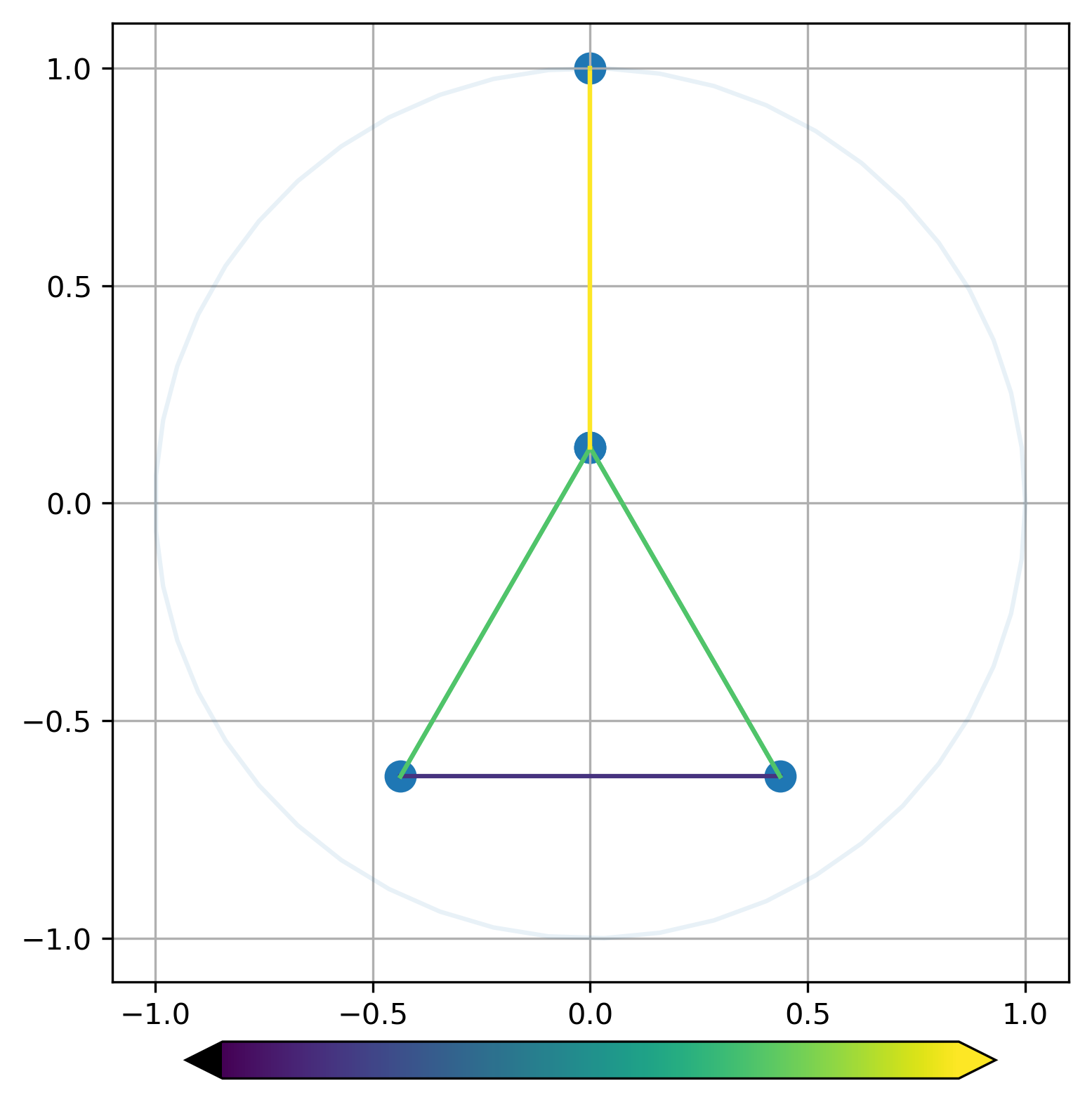}
\caption{Maximal spectral realizations of the house graph and the house x-graph. 
Both plots have been scaled so that the largest magnitude vertex has magnitude 1. 
See \cref{s:HouseGraph} for details.}
\label{f:houseGraphs}
\end{center}
\end{figure}

\subsection{Non-unit edge length constraints} 
\label{s:NonUnit}

\begin{figure}[t!]
\begin{center}
\includegraphics[width=0.3\textwidth]{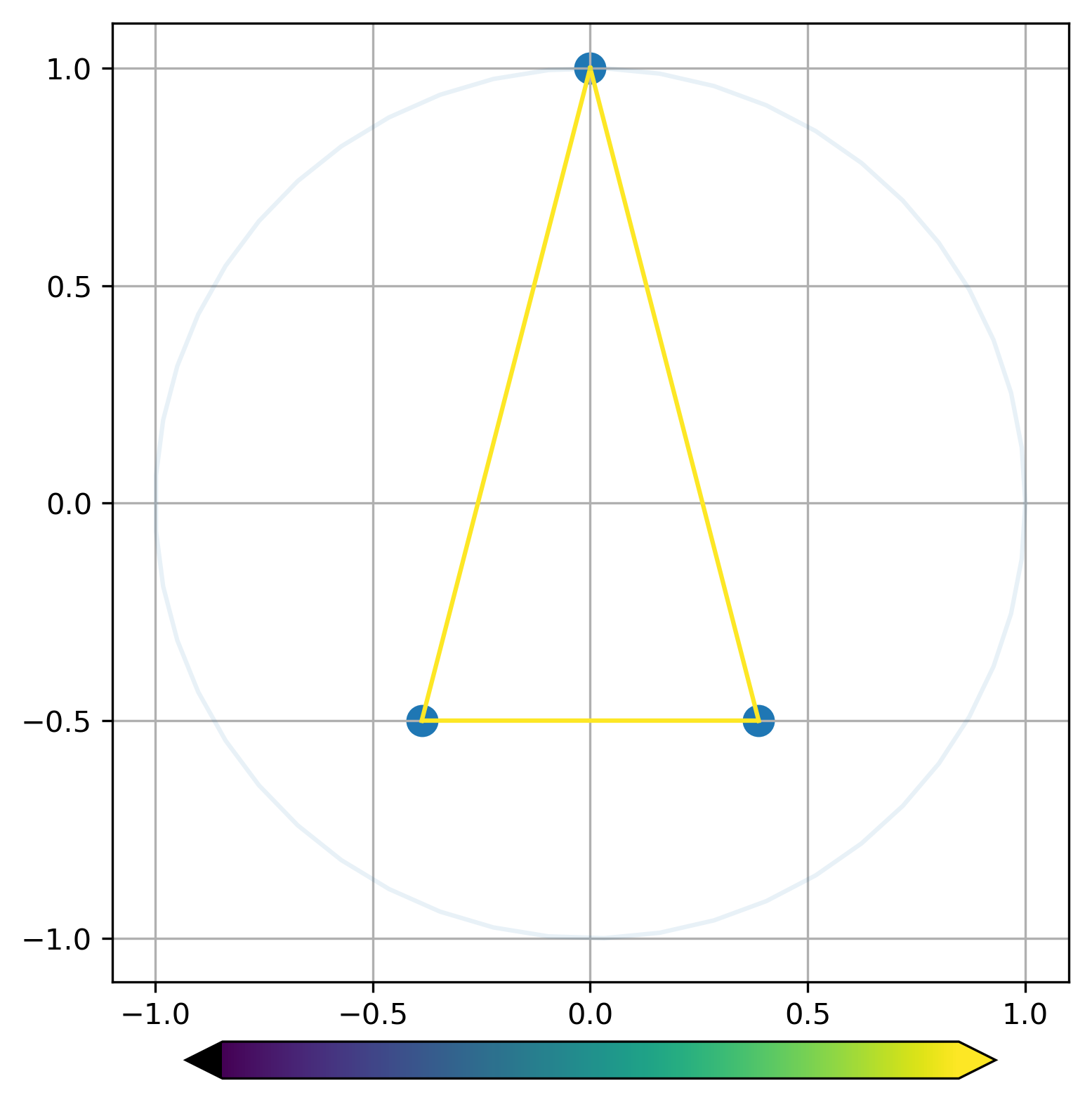}
\includegraphics[width=0.3\textwidth]{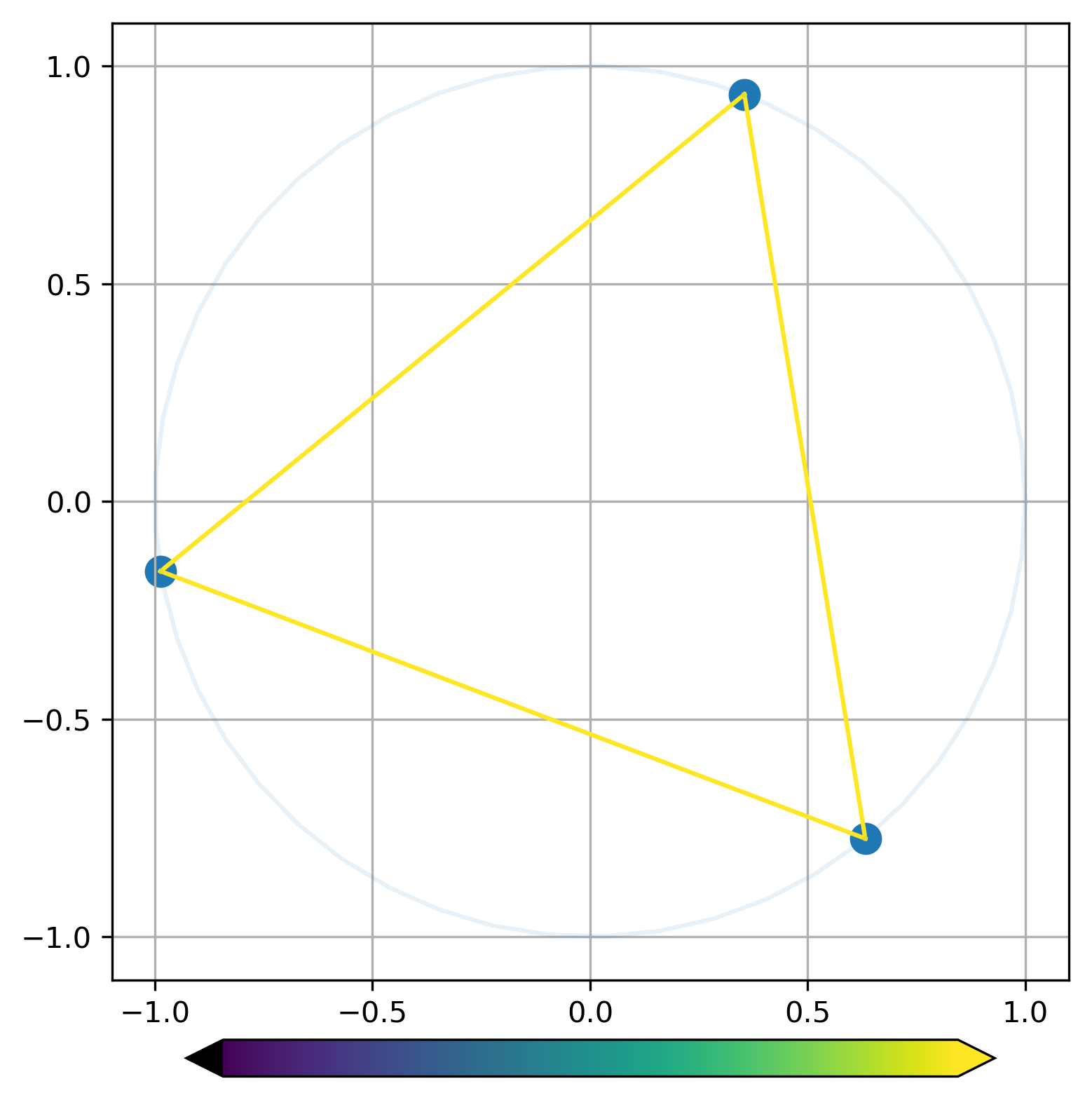}
\includegraphics[width=0.3\textwidth]{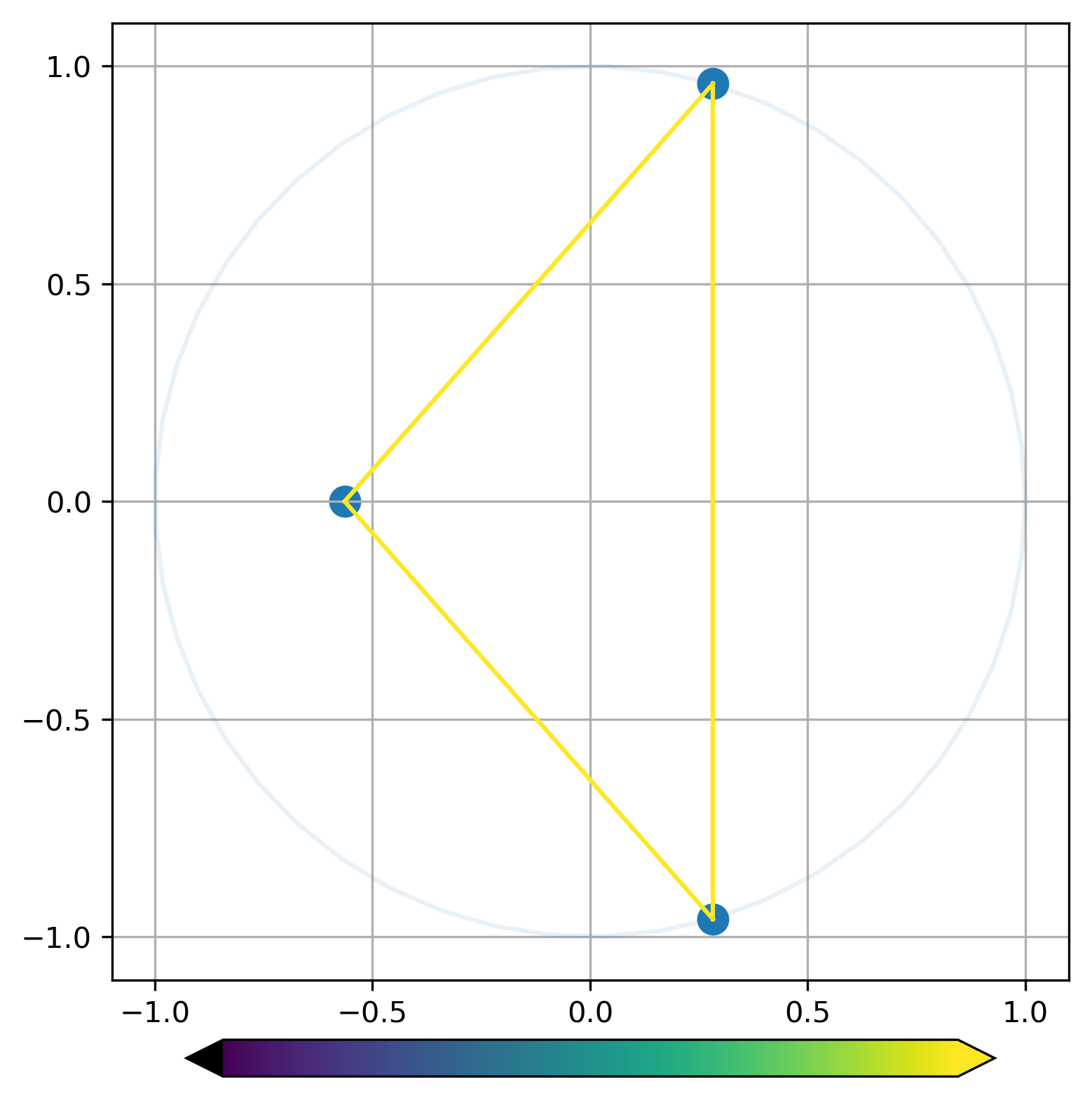}
\caption{Maximal spectral realizations of the 3-cycle with non-unit edge length constraints. 
All plots have been scaled so that the largest magnitude vertex has magnitude 1. 
See \cref{s:NonUnit} for details.}
\label{f:triangleGraphs}
\end{center}
\end{figure}

To illustrate the problem where the edge length constraints are not unity, \ie, $\phi \neq 1$,  we consider the cycle on $n=3$ vertices and 
$\phi = (a,1,1)$ for $a\in(0,2)$. 
The maximal graph realization are plotted in \cref{f:triangleGraphs} for $a=0.5$ (left), $a=1$ (center), and $a=1.5$ (right). 
For $a > 2$, a realization with these edge length constraints is not possible by the triangle inequality. In this case, one of the edge weights becomes zero and the realization becomes one-dimensional.

\subsection{Minimizing the largest graph Laplacian eigenvalue and minimal graph realizations} 
\label{s:NumExamples:Largest}

\begin{figure}[t!]
\begin{center}
\includegraphics[width=0.45\textwidth]{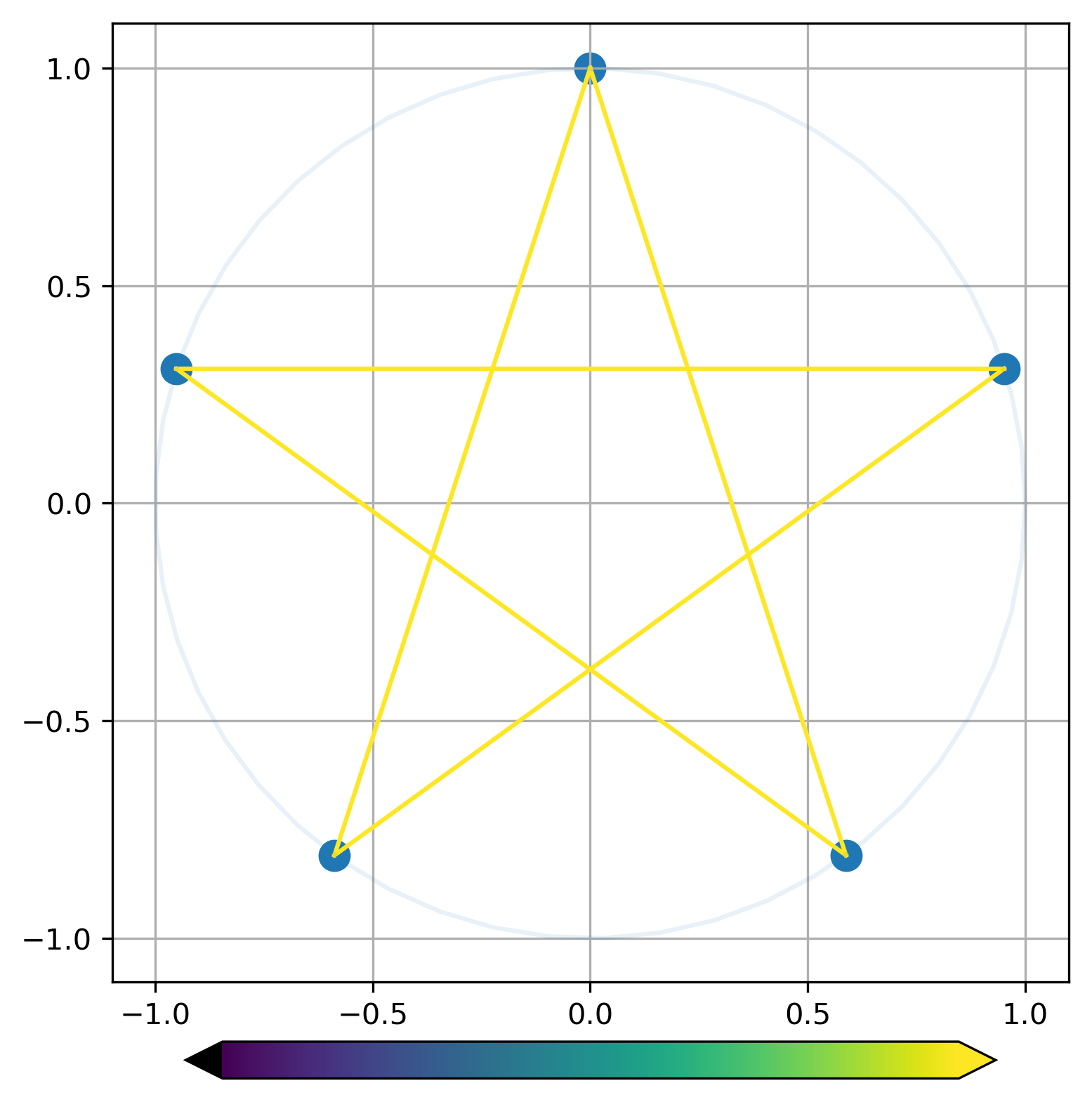}
\includegraphics[width=0.45\textwidth]{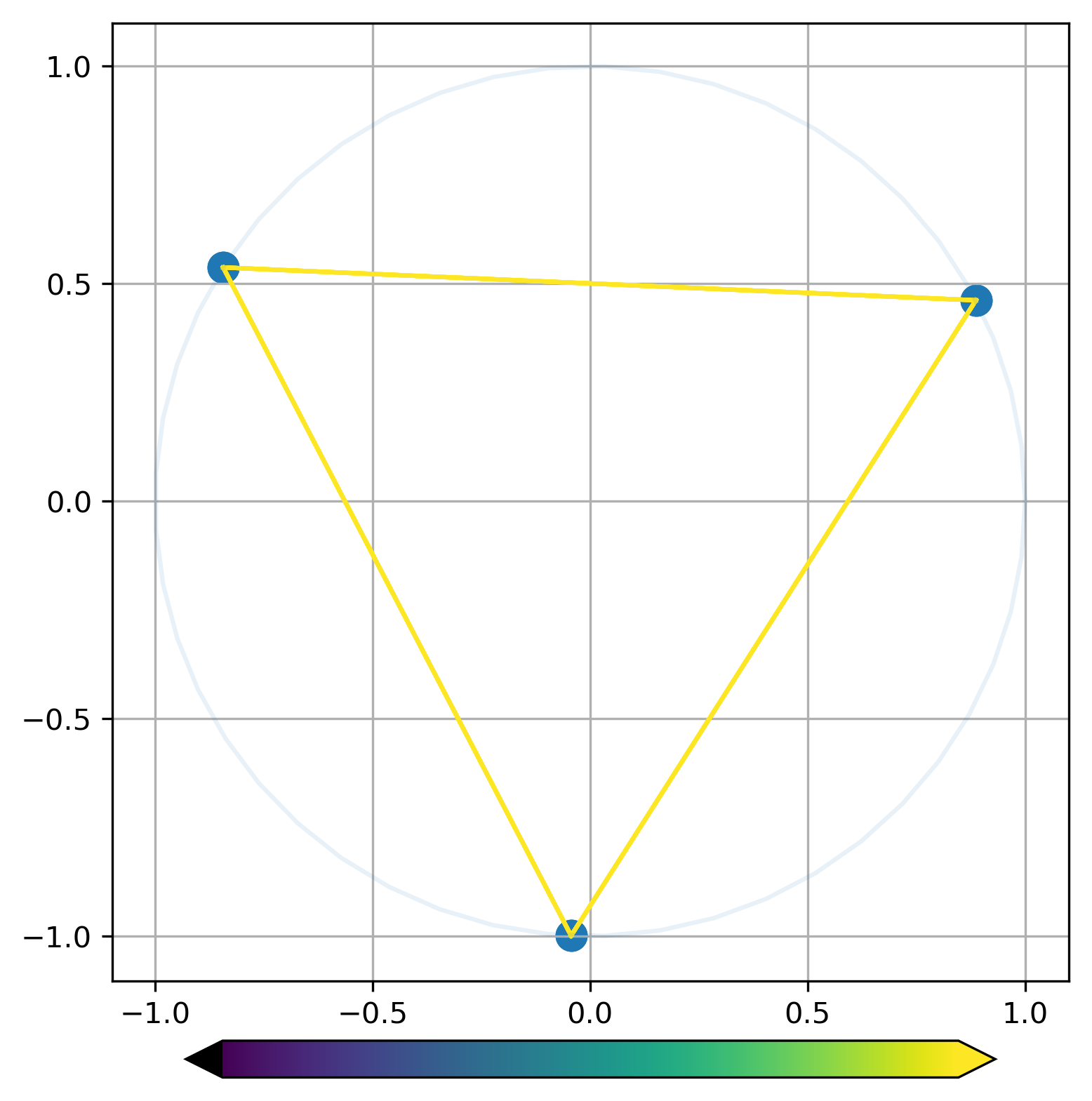} \\
\includegraphics[width=0.45\textwidth]{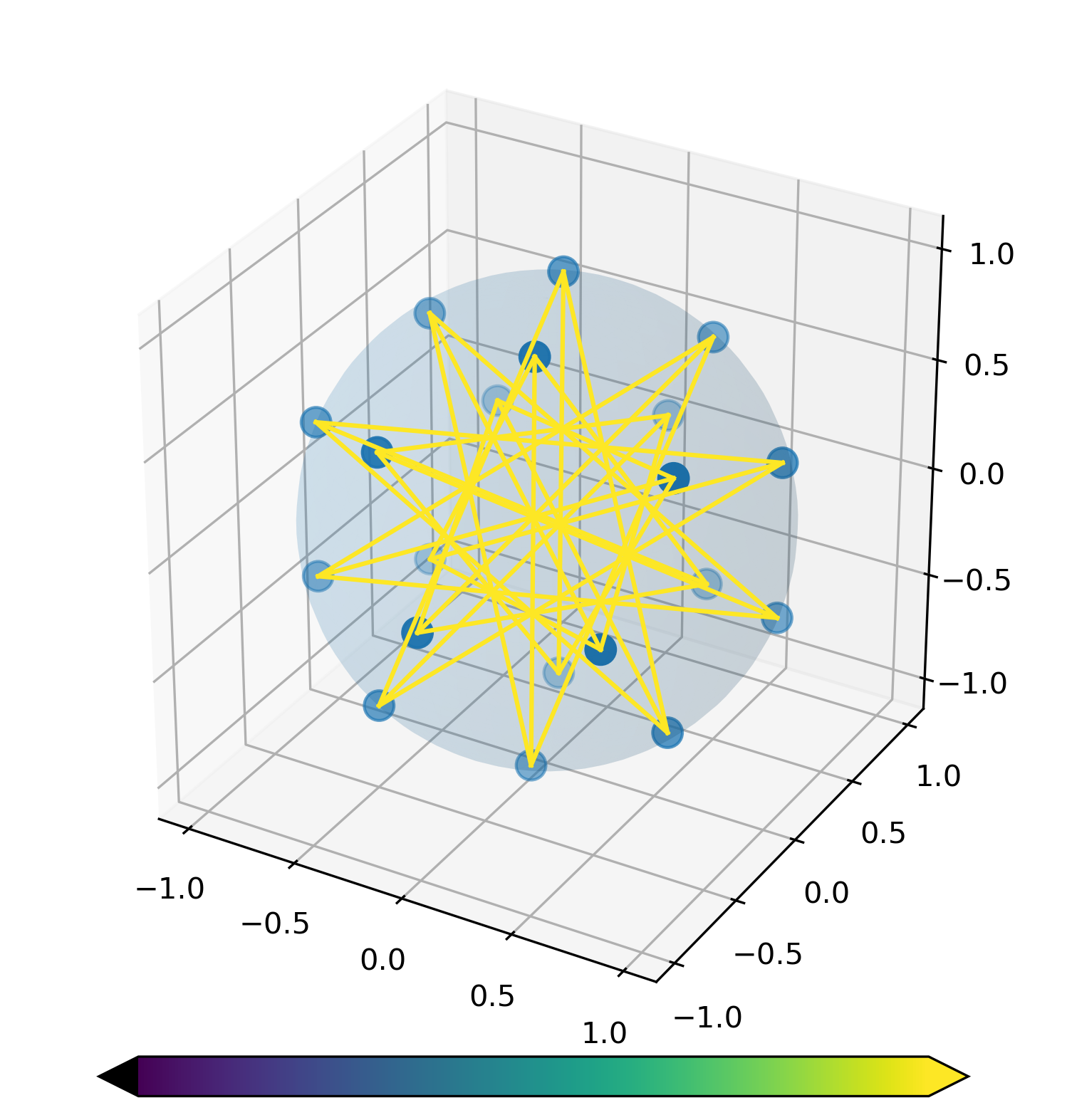}
\includegraphics[width=0.45\textwidth]{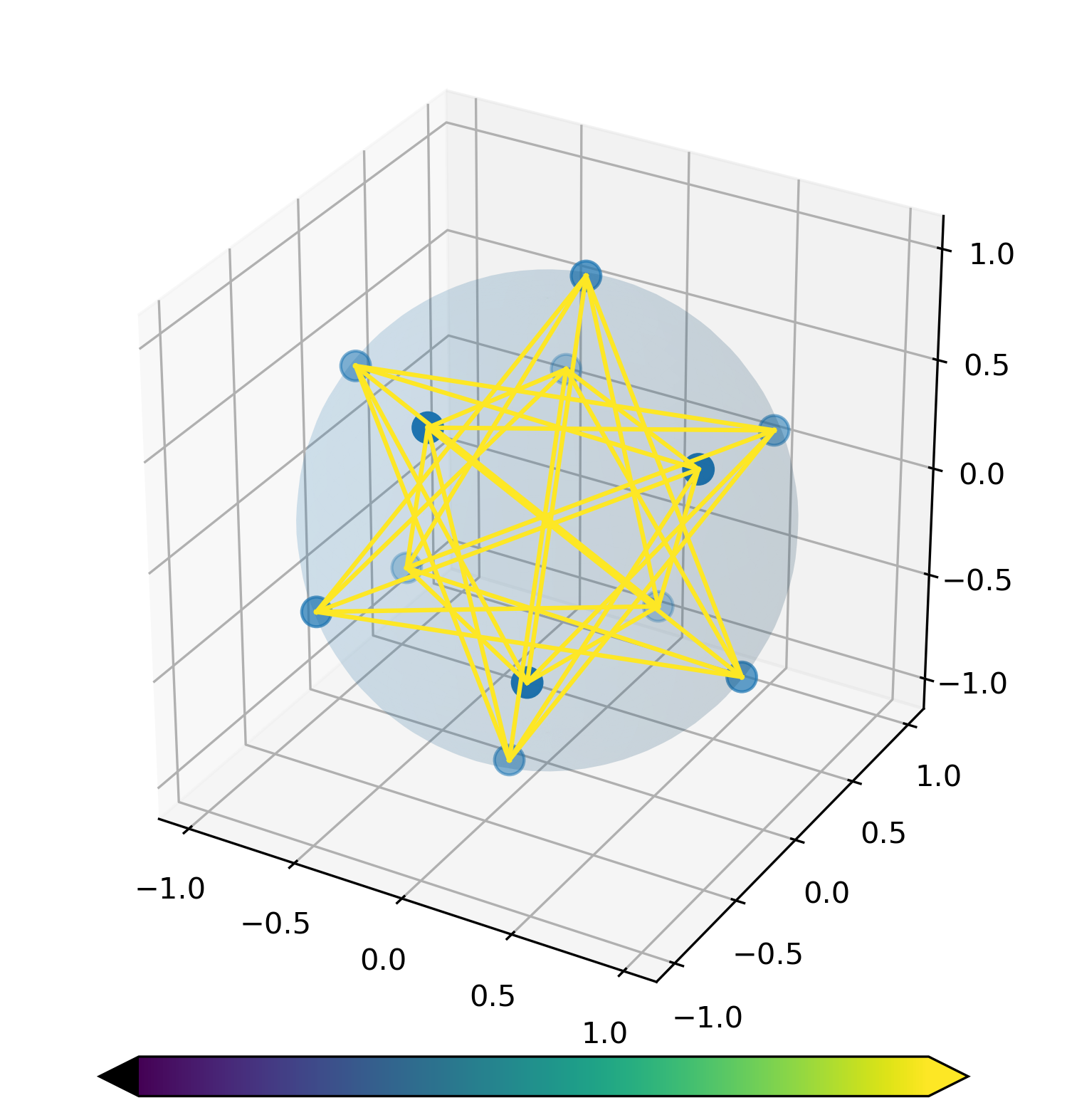} 
\caption{Minimal graph realizations for a variety of graphs: 
a 5-cycle graph, 
an octahedral graph, 
a dodecahedral graph, and
an icosahedral graph. 
All plots have been scaled so that the largest magnitude vertex has magnitude 1. 
See \cref{s:NumExamples:Largest} for more details.}
\label{f:graphZoo'}
\end{center}
\end{figure}

We consider the problems of minimizing the largest eigenvalue of the graph Laplacian \eqref{e:EOP'} and minimal graph realizations \eqref{e:MaxGraphRealization2'}. 
By \cref{ex:2pt}, a minimal graph realization for a semi-regular bipartite graphs is the two point realization. 
We computationally verified this for the cube graph, the square lattice graph, and the $n$-cycle graph for several even values of $n$.   

In \cref{f:graphZoo'}, we plot the minimal graph realizations for the 5-cycle graph, the octahedral graph, the dodecahedral graph, and the icosahedral graph. 
The minimal graph realizations 
generated using \cref{t:UnitDistGraphReal'}
are more compact than the maximal graph realization generated using 
\cref{t:UnitDistGraphReal} and hence generally have more crossing edges, are more ``spiky'', and are not injective.  
We also found the minimal graph realization for a tetrahedral graph, but it is the same as the maximal graph realization (the usual graph embedding). This might be anticipated because there is a unique (modulo rotation) embedding of the tetrahedral graph in three dimensions.

\section{Discussion} \label{s:Disc}
In this paper, for a fixed graph $G$, we considered the relationship between 
maximal graph realizations satisfying \eqref{e:MaxGraphRealization2}
and the  optimization problem of maximizing the first non-trivial eigenvalue of the (unnormalized) graph Laplacian over non-negative edge weights \eqref{e:EOP}. 
Our main result (\cref{t:UnitDistGraphReal}) is that the spectral realization of a graph using the eigenvectors corresponding to the solution to \eqref{e:EOP}, under certain assumptions, is a maximal graph realization.  We also prove a converse result (\cref{t:Converse}) that a maximal graph realization, under certain assumptions, has coordinate vectors which is are eigenvectors for \emph{some} graph Laplacian. In \cref{s:NumExamples}, we illustrated  \cref{t:Converse} and \cref{t:UnitDistGraphReal} with a number of examples. 

While the analysis here provides an interesting theoretical tool for finding unit-distance graph realizations, 
practically speaking, there are limitations since, before solving \eqref{e:EOP}, we do not know 
(i) the dimension $d = \dim E_{\lambda^\star}$ of the maximal graph realization or
(ii) if the assumption that  $w^\star > 0$ holds.
However, as shown in \cref{s:NumExamples}, there are a number of interesting graphs for which the dimension is either 2 or 3 and $w^\star > 0$ (see \cref{f:graphZoo}). 

There are a number of interesting extensions of this work. 
For structured graphs, it might be possible to explicitly identify a maximal graph realization, so this could yield interesting examples to test computational methods for eigenvalue optimization. 
In this paper, we considered the unnormalized graph Laplacian and optimized over nonnegative edge weights. It is possible to consider other matrices associated with a graph (\eg, the normalized graph Laplacian) and also vertex weights. Another variation would be to consider the spectral realization corresponding to an extremal first (non-trivial) Steklov eigenvalue for a vertex subset. 
Finally, in data analysis, pairwise comparison data is often represented using a $w$-weighted graph where the weights correspond to the pairwise comparisons. The spectral realization of a $G$ (for these fixed weights) is then used for analysis or visualization. It is an interesting question of whether properties of this spectral graph realization could be bounded by further analyzing the extremal graph realizations.

\subsection*{Acknowledgments}
B. Osting would like to thank  \'Edouard Oudet for useful conversations and the Laboratoire Jean Kuntzmann (LJK), Universit\'e Grenoble Alpes for hosting him during his sabbatical leave, where this work was initiated. 

\clearpage
\printbibliography

\end{document}